\documentclass{article}

\usepackage{arxiv}

\usepackage[utf8]{inputenc} 
\usepackage[T1]{fontenc}    
\usepackage{hyperref}       
\usepackage{url}            
\usepackage{booktabs}       
\usepackage{amsfonts}       
\usepackage{nicefrac}       
\usepackage{microtype}      
\usepackage{lipsum}	
\usepackage{amsmath}
\usepackage{amsthm}
\newtheorem{definition}{Definition}
\newtheorem{remark}{Remark}
\newtheorem{theorem}{Theorem}
\newtheorem{lemma}{Lemma}
\newtheorem{proposition}{Proposition}

\title{Degenerate elliptic problem with a singular nonlinearity}


\author{
Abdelaaziz Sbai and Youssef El hadfi\\
Laboratory LIPIM\\
National School of Applied Sciences Khouribga\\
Sultan Moulay Slimane University, Morocco\\
  \texttt{yelhadfi@gmail.com}\\
    \texttt{sbaiabdlaaziz@gmail.com}\\
}



\begin{document}
\maketitle
\begin{abstract}
In this paper, we prove existence and regularity results for solutions of some nonlinear Dirichlet problems for an elliptic  equation defined by a degenerate coercive operator and a singular right hand side. 
\begin{equation}\label{01}
\left\{
\begin{array}{lll}
-\displaystyle\mbox{div}( a(x,u,\nabla u))&=\displaystyle\frac{f}{u^{\gamma}} & \mbox{ in } \Omega \\
u&>0 &\mbox{ in }\Omega \\
u&=0  &\mbox{ on }  \delta\Omega
\end{array}
\right.
\end{equation}
where $\Omega $ is bounded open subset of $I\!\!R^{N}(N\geq2),$ $\gamma>0$ and $ f$ is a nonnegative function that belongs to some Lebesgue space.
\end{abstract}

\keywords{Degenerate elliptic equation, singular nonlinearity, existence,
	regularity, symmetrization, Sobolev spaces}

\section{Introduction}
The aim of this work is the study of the following boundary value problem 
\begin{equation}\label{11}
\left\{
\begin{array}{lll}
&-\mbox{div}( a(x,u,\nabla u)) =fh(u)& \mbox{ in } \Omega \\
&u>0 &\mbox{ in }\Omega \\
&u=0  &\mbox{ on }  \delta\Omega
\end{array}
\right.
\end{equation}
with $\Omega$ a bounded open subset of $I\!\!R^{N}$, $ N\geq 2, $\,\, $N>p>1$, $f $ is non negative and it belongs to 
$L^{m}(\Omega)$ for some $m\geq1$. Finally the singular sourcing
$h:(0,\infty)\longrightarrow (0,\infty)$ is continuous and bounded, such that the following properties hold true 
$\exists C, \gamma >0   \mbox{ s.t } h(s)\leq\frac{C}{s^{\gamma}} \,\forall s\in (0,+\infty ).$
\par Let us give the precise assumptions on the problems that we will
study. Let $\Omega$ be a bounded open subset of $ I\!\!R^{N}$,$N\geq2,$ let $N>p>1$ and let $a:\Omega\times I\!\!R\times I\!\!R^{N}\longrightarrow 
I\!\!R^{N}$ be carath\'eodory function (that is $a(.,t,\xi)$is measurable on $\Omega$ for every $(t,\xi) $ in $I\!\!R\times I\!\!R^{N}$ and $a(x,.,.)$ is continuous on $I\!\!R\times I\!\!R^{N}$ for almost every $x$ in $\Omega$), such that the flowing assumptions hold :
\begin{equation}\label{12}
a(x,t,\xi).\xi\geq b(|t|)|\xi|^{p},
\end{equation} 
for almost every $x$ in $\Omega$ and for every $(t,\xi))$ in 
$I\!\!R\times I\!\!R^{N}$, where $b:I\!\!R^{+}\longrightarrow ]0,\infty[$ is a decreasing continuous such that its primitive 
\begin{equation}\label{13}
\mathcal{B}(t)=\int_{0}^{t} b(s)^{\frac{1}{p-1}}ds,
\end{equation} 
is unbounded, for the sake of simplicity, we take in \eqref{12}
\begin{equation}\label{112}
b(t)=\frac{\alpha}{(1+t)^{\theta(p-1)}},
\end{equation}
for some real number $0\leq \theta\leq 1$  and some $\alpha>0.$
\begin{equation}\label{14}
|a(x,t,\xi)|\leq \beta
\big[ a_{0}(x)+|t|^{p-1}+|\xi|^{p-1}\big],  
\end{equation} 
for almost every $x$ in $\Omega$, for every $(t,\xi) $ in $I\!\!R\times I\!\!R^{N}$, where $a_{0}$ is 
non-negative function in $L^{p^{\prime}}(\Omega),$ with $\frac{1}{p}+\frac{1}{p^{\prime}}=1$ and $ \beta\geq\alpha,$ 
\begin{equation} \label{15}
\big[a(x,t,\xi)-a(x,t,\xi^{'})\big] (\xi-\xi^{'})> 0,
\end{equation}
for almost every $x$ in $\Omega$ and for every  $t$ in $I\!\!R$, for every $\xi$,$\xi^{'}$ in $I\!\!R^{N}$, with 
$\xi \neq \xi^{'}$ 
we will then define, for $u$ in $W_{0}^{1,p}(\Omega)$ the 
non linear elliptic operator $$ A(u)=- div (a(x,u,\nabla u)).$$
In the case of linear elliptic opertors a rich amount of research has been  conducted to prove the existence  of a solution to the problem
\begin{equation*}
\left\{
\begin{array}{lll}
&-\Delta u =f h(u) &\mbox{  in } \Omega \\
&u=0 &\mbox{ on } \delta\Omega
\end{array}
\right.
\end{equation*}
An existence result pertaining to the case $h(s)=\frac{1}{s^{\gamma}}$ with $f$ being bounded away from the origin and sufficiently regular possesses a unique solution by desingularizing the  problem and then applying the sub and the super solution method. Few generalizations to this result can be found in \cite{01}. A weaker
condition on the function $f$ from $L^{m}(\Omega)$, for $m\geq 1$. In a study due to Boccardo at. \cite{03}, they have proved  the existence and uniqueness  of solutions to the problem 
\begin{equation}\label{16}
\left\{
\begin{array}{lll}
&-\Delta u =\frac{f}{u^{\gamma}}&\mbox{ in }\Omega \\
&u>0 &\mbox{ in }\Omega \\
&u=0 &\mbox{ on } \delta\Omega.
\end{array}
\right.
\end{equation}
In \cite{071, 232, 233} a nonlinear version of the above problem was studied, considering an operator as the p-laplacian $-\mbox{div} (|\nabla u|^{p-2}\nabla u)$ instead of $-\Delta u,$ the authors prove existence of and regularity results if $f$ belong to $L^{m}(\Omega).$

However, the authors analyse the following singular problem 
\begin{equation}\label{17}
\left\{
\begin{array}{lll}
&C(u)=-div(A(x) \nabla u) =g(x,u)& \mbox{ in }\Omega \\
&u=0 &\mbox{ on }\delta\Omega
\end{array}
\right.
\end{equation}
with $g(x,s)$ singular for $s=0,$ have been studied by various authors in the past.We refer in particular to the paper by Crandall at. (see \cite{05}) and to the one by Lazer at. (see \cite{01}). In these last works, the authors have dealt with the case $g(x,s)=\frac{f(x)}{u^{\gamma}},$ assuming that $f$ is a continuous function. They  proved existence and regularity results for the solutions, using the method of subsolutions and supersolutions by means of a suitable power of the first  eigenfunctions of the Laplacian in $\Omega$. The linear case, with $g(x,u)=\frac{f(x)}{u^{\gamma}},$ was then exhaustively studied by Boccardo at.  (see\cite{03}), we refer to \cite{07} also to the references to previous works in which problems of the form \eqref{17} have been examined. 

\par The last works that were done in the presence of singular term,studied the existence and regularity when the operator elliptic is linear or $p$-Laplacian does not depend on $u.$ In our work we use the elliptic nonlinear operator depends on $u$ with the degenerate  coercivity, the difficulty we face is to proved the existence, so that the standard Leary-Lions Theorem cannot be applied. To overcome this problem, we need to approximate the problem \eqref{11}, a suitable way and using Shaulder's fixed point theorem in order to prove the existence.
Of course, once one approximates the equation, both a priori
estimates  and asymptotic behavior (strong convergence in Sobolev spaces)
of the sequence of approximating solutions have to be proven in order to pass to the limit.
\begin{definition} 
	Let $f$ be in $L^{m}(\Omega)$,$m \geq 1 $. A measurable  function $u$ is a solution of \eqref{11} in the sense of distributions if  $u> 0$ a.e in $\Omega $, $ fh(u) \in L^{1}(\Omega)$ and if 
	\begin{equation}\label{19}
	\int_{\Omega} a(x,u,\nabla u) \nabla \varphi dx =\int_{\Omega}
	f h(u) \varphi dx, \mbox{ for every }   \varphi\in L^{\infty}(\Omega) \cap W_{0}^{1,p}(\Omega).
	\end{equation}
\end{definition}
Our first result is the following:
\begin{theorem} \label{Athm1}
	Let  $f\in L^{m}(\Omega)$ with $m>N\slash p,$
	assume that \eqref{12}, \eqref{112}, \eqref{14} and \eqref{15} hold true then, there  exists a function  $u \in W_{0}^{1,p}(\Omega)\cap L^{\infty}(\Omega)$ solution of \eqref{11}.
\end{theorem}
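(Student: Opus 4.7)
The plan is to prove Theorem~\ref{Athm1} by an approximation argument in the spirit of Boccardo–Orsina, combined with a Schauder fixed point to produce the approximating solutions, since Leray–Lions cannot be applied directly because of the degenerate coercivity in \eqref{12}--\eqref{112} together with the singular right–hand side. Concretely, I would consider for each integer $n\ge 1$ the non--singular, non--degenerate problem
\begin{equation*}
\left\{
\begin{array}{ll}
-\mbox{div}\bigl(a(x,T_n(u_n),\nabla u_n)\bigr)=\dfrac{f_n(x)}{(|u_n|+\frac1n)^{\gamma}} & \mbox{in }\Omega,\\[4pt]
u_n=0 & \mbox{on }\partial\Omega,
\end{array}
\right.
\end{equation*}
where $T_n$ is the usual truncation at height $n$ and $f_n=T_n(f)\ge 0$ is bounded. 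For a fixed $v\in L^p(\Omega)$, the frozen operator $u\mapsto -\mbox{div}(a(x,T_n(v),\nabla u))$ is coercive and pseudomonotone (the coefficient $b(|T_n(v)|)\ge b(n)>0$ is bounded below), so Leray–Lions yields a unique solution $u\in W_0^{1,p}(\Omega)\cap L^{\infty}(\Omega)$. Defining $S(v)=u$, one checks with the standard estimates that $S$ is continuous and compact from $L^{p}(\Omega)$ into itself and maps a ball into itself, hence Schauder provides a fixed point $u_n\ge 0$.

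The next block is the a~priori estimates, and this is where the assumption $m>N/p$ is used. First I would derive a uniform $L^{\infty}$ bound via Stampacchia: testing with $G_k(u_n)=(u_n-k)^{+}$, using \eqref{12} and the fact that on $\{u_n>k\}$ one has $\tfrac{1}{(u_n+1/n)^{\gamma}}\le k^{-\gamma}$, leads to
\[
\frac{\alpha}{(1+\|u_n\|_\infty)^{\theta(p-1)}}\int_{\{u_n>k\}}|\nabla G_k(u_n)|^{p}\,dx\le \frac{1}{k^{\gamma}}\int_\Omega f\,G_k(u_n)\,dx,
\]
and the standard Stampacchia lemma, exploiting $f\in L^m$ with $m>N/p$, gives $\|u_n\|_{L^\infty}\le M$ independently of $n$. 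Once this is in hand, $b(|u_n|)\ge b(M)=\alpha(1+M)^{-\theta(p-1)}>0$ uniformly, so the degenerate coercivity is \emph{recovered}: testing the equation with $u_n$ yields a uniform $W_0^{1,p}(\Omega)$ bound.

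To guarantee that the singular right--hand side makes sense in the limit, I would prove the usual strict positivity on compact sets: for every $\omega\subset\subset\Omega$ there is $c_\omega>0$ with $u_n\ge c_\omega$ a.e.\ in $\omega$. This follows by comparison with the solution of a non--singular auxiliary problem with datum bounded below on $\omega$, together with the strong maximum principle for the uniformly elliptic operator obtained once $u_n$ is $L^\infty$--bounded. Then on $\omega$ we have $\tfrac{f_n}{(u_n+1/n)^{\gamma}}\le c_\omega^{-\gamma}f\in L^m(\omega)$, which is enough to conclude $fh(u)\in L^1$ in the limit.

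Finally one passes to the limit. Up to a subsequence $u_n\rightharpoonup u$ weakly in $W_0^{1,p}(\Omega)$, $u_n\to u$ a.e.\ and in $L^q$ for every $q<\infty$, and $u\ge c_\omega>0$ on each $\omega\subset\subset\Omega$. The main obstacle, as usual in these problems, is the almost everywhere convergence of the gradients, which is needed both to identify $a(x,u,\nabla u)$ and to pass to the limit in the equation; I would obtain it by the Boccardo–Murat argument using $T_k(u_n-u)$ as a test function, which together with the monotonicity \eqref{15} and the local lower bound on $u_n$ (so that the right--hand side is equi--integrable on any $\omega$) gives $\nabla u_n\to\nabla u$ a.e.\ in $\Omega$. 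With this convergence, Vitali's theorem handles the right--hand side against $\varphi\in L^{\infty}\cap W_0^{1,p}$ and the weak convergence of $a(x,u_n,\nabla u_n)$ finishes the identification of $u$ as a distributional solution in the sense of \eqref{19}, with $u\in W_0^{1,p}(\Omega)\cap L^{\infty}(\Omega)$.
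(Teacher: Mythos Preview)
Your overall strategy---approximate, obtain $u_n$ by Schauder, derive uniform estimates, pass to the limit---is the same as the paper's, but two of your steps, as written, contain real gaps.

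First, the Stampacchia $L^\infty$ estimate is circular. The displayed inequality has the coercivity constant $\alpha(1+\|u_n\|_\infty)^{-\theta(p-1)}$ on the left, which depends on exactly the quantity you are trying to bound; running the Stampacchia lemma with this constant only yields $\|u_n\|_\infty\le M(\|u_n\|_\infty)$ and closes no loop. The paper avoids this via a rearrangement argument (Theorem~\ref{Athm5}): testing with $T_\varepsilon(G_t(u_n))$, using the Fleming--Rishel coarea formula and the isoperimetric inequality, one obtains an estimate on $\mathcal{B}(u_n^*(0))$ where $\mathcal{B}(t)=\int_0^t b(s)^{1/(p-1)}\,ds$ from \eqref{13} absorbs the degeneracy, and inverting $\mathcal B$ gives the uniform $L^\infty$ bound \eqref{310}. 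An equivalent cure is to run Stampacchia on $\mathcal B(u_n)$ rather than on $u_n$; but the plain $G_k(u_n)$ test does not work here.

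Second, the strict positivity $u_n\ge c_\omega>0$ on $\omega\subset\subset\Omega$ is not justified under the bare hypotheses \eqref{12}--\eqref{15}. The Boccardo--Orsina argument you allude to rests on the monotonicity $u_{n}\le u_{n+1}$ (hence $u_n\ge u_1>0$) and on a strong maximum principle; here the approximate operator $-\mathrm{div}\,a(x,T_n(u_n),\nabla u_n)$ depends on $u_n$ also through its zero-order slot, so neither a comparison principle between successive $u_n$ nor a strong maximum principle is available in general. The paper does \emph{not} use positivity at all to pass to the limit in the singular term: it tests \eqref{33} with $V_\delta(u_n)\varphi$ (with $V_\delta$ as in \eqref{N1}), shows via \eqref{12}--\eqref{14} that the contribution from $\{u_n\le\delta\}$ vanishes as $n\to\infty$ and then $\delta\to0^+$, and treats $\{u_n>\delta\}$ by dominated convergence; along the way only $\{u=0\}\subset\{f=0\}$ is needed, and that follows from Fatou applied to \eqref{41}. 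Once the $L^\infty$ bound is in hand, the paper also takes $u_n-u$ directly (no truncation needed) as test function and uses \eqref{15} plus Lemma~5 of \cite{12} to get strong $W_0^{1,p}$ convergence.
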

\begin{theorem}\label{Athm2} 
	Assume that \eqref{12}, \eqref{112},\eqref{14},\eqref{15} and $0< \gamma < \theta(p-1)+1 $  hold true. Let $f\in L^{m}(\Omega)$ with 
	\begin{equation}\label{113}
	m_{1}=\left(\frac{p^{\ast}}{\theta(p-1)+1-\gamma}\right)^{\prime}=\frac{Np}{Np-(N-p)[\theta(p-1)+1-\gamma]}\leq m <N\slash p,
	\end{equation}
	then, there exists at least one solutions  $u$ in $W_{0}^{1,p}(\Omega)\cap L^{r}(\Omega)$ of \eqref{11}
	\begin{equation}\label{114}
	r=\frac{Nm[(p-1)(1-\theta)+\gamma]}{N-pm}.
	\end{equation}
\end{theorem}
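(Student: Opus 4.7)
The plan is to follow the classical approximation--estimate--passage paradigm, tailored to combine the degenerate coercivity \eqref{112} with the singular right-hand side $f/u^\gamma$. For each integer $n\geq 1$, consider the approximating problem
\[
-\mathrm{div}\,a(x,u_n,\nabla u_n)=\frac{f_n(x)}{(u_n+1/n)^\gamma},\qquad u_n\in W_0^{1,p}(\Omega),
\]
with $f_n=\min(f,n)\in L^\infty(\Omega)$. Since each right-hand side is bounded, Theorem~\ref{Athm1} provides a solution $u_n\in W_0^{1,p}(\Omega)\cap L^\infty(\Omega)$, and a weak-comparison argument in $n$ yields $u_n\geq u_1\geq c_\omega>0$ uniformly on every $\omega\subset\subset\Omega$; this local positivity is what will let us pass to the limit in the singular term.

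The heart of the proof lies in the a priori estimates. I would test the equation with $\varphi_\lambda(u_n)=[(1+u_n)^\lambda-1]/\lambda \in W_0^{1,p}(\Omega)\cap L^\infty(\Omega)$ (or a slight variant of the form $[(u_n+1/n)^\lambda-(1/n)^\lambda]/\lambda$ to tame the singular right-hand side when $\gamma\geq 1$), and tune $\lambda>\gamma$. Using the coercivity \eqref{112} on the left and Hölder with exponents $m$ and $m'$ on the right leads to
\[
\alpha\int_\Omega (1+u_n)^{\lambda-1-\theta(p-1)}|\nabla u_n|^p\,dx\;\leq\; C\int_\Omega f\,(1+u_n)^{\lambda-\gamma}\,dx.
\]
Setting $q=(\lambda+p-1-\theta(p-1))/p$, the left-hand side equals $(\alpha/q^p)\int|\nabla[(1+u_n)^q-1]|^p$, so Sobolev's embedding in $W_0^{1,p}(\Omega)$ gives
\[
\|(1+u_n)^q-1\|_{L^{p^*}}^p \;\leq\; C\|f\|_{L^m}\,\|(1+u_n)^{\lambda-\gamma}\|_{L^{m'}}.
\]
Equating the powers $qp^*=(\lambda-\gamma)m'$ closes the iteration and, using $p^*=Np/(N-p)$, an elementary computation delivers
\[
qp^*=r=\frac{Nm[(p-1)(1-\theta)+\gamma]}{N-pm},
\]
so $\{u_n\}$ is uniformly bounded in $L^r(\Omega)$. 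The hypothesis $m\geq m_1$ is precisely what forces the resulting $\lambda$ to satisfy $\lambda\geq\theta(p-1)+1$, equivalently $q\geq 1$, whence $(1+u_n)^{p(q-1)}\geq 1$ and
\[
\int_\Omega|\nabla u_n|^p\,dx \;\leq\; \frac{1}{q^p}\int_\Omega|\nabla[(1+u_n)^q]|^p\,dx \;\leq\; C,
\]
yielding the desired $W_0^{1,p}$ bound.

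With these estimates in hand, extract a subsequence so that $u_n\rightharpoonup u$ in $W_0^{1,p}(\Omega)$, $u_n\to u$ in $L^s(\Omega)$ for $s<p^*$ and a.e., and $u\geq c_\omega$ on compacts of $\Omega$. The delicate point is the a.e.\ convergence of $\nabla u_n$, needed because $a$ is nonlinear in the gradient. For this I would test with $T_k(u_n-u)$ and exploit the strict monotonicity \eqref{15}, the growth \eqref{14}, and the local positivity $u_n\geq c_\omega$ (which controls the singular right-hand side in $L^1_{\mathrm{loc}}$) to obtain
\[
\int_\Omega\bigl[a(x,u_n,\nabla u_n)-a(x,u_n,\nabla u)\bigr]\cdot(\nabla u_n-\nabla u)\,dx\;\longrightarrow\; 0,
\]
from which $\nabla u_n\to\nabla u$ a.e.\ by the standard Leray--Lions argument. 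A Vitali argument then transfers this to strong convergence of $a(x,u_n,\nabla u_n)$ in $(L^{p'}(\Omega))^N$, and the a.e.\ convergence of $u_n$ combined with the local lower bound yields $f_n/(u_n+1/n)^\gamma\to f/u^\gamma$ in $L^1_{\mathrm{loc}}(\Omega)$, enabling passage to the limit in the weak formulation \eqref{19}.

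The main obstacle is the coupling of the two nonlinearities in the a priori estimate: a single test function $\varphi_\lambda$ must simultaneously convert the weight $(1+u_n)^{-\theta(p-1)}$ coming from the coercivity into a Sobolev-compatible gradient on the left, and match, via Hölder against $f\in L^m$, the integrability produced by $u_n^{-\gamma}$ on the right. The sharp exponent $m_1$ in \eqref{113} is exactly the smallest $m$ for which the resulting $\lambda$ satisfies $\lambda\geq\theta(p-1)+1$, which is what ensures that the $L^r$ estimate and the $W_0^{1,p}$ bound both come out of the same testing.
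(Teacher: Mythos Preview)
Your a priori estimate is exactly the paper's: the same test function $(1+u_n)^\lambda-1$, the same exponent-matching $qp^*=(\lambda-\gamma)m'$, and the same observation that $m\ge m_1\Leftrightarrow\lambda\ge\theta(p-1)+1\Leftrightarrow q\ge1$, which yields the $W_0^{1,p}$ bound from the $L^r$ bound. So the heart of the argument coincides.

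The passage to the limit, however, differs in a way that matters. You invoke a ``weak-comparison argument'' to get $u_n\ge u_1\ge c_\omega>0$ on compact subsets, and then use this local positivity to control $f_n/(u_n+1/n)^\gamma$ in $L^1_{\mathrm{loc}}$. Under the hypotheses \eqref{12}, \eqref{112}, \eqref{14}, \eqref{15} no monotonicity of $a(x,t,\xi)$ in $t$ is assumed, and the principal part is genuinely $u$-dependent and degenerate; a comparison principle of the type $u_{n+1}\ge u_n$ (or even a strong maximum principle giving $u_1\ge c_\omega$) is not available in this generality. This is a real gap: for operators such as $a(x,t,\xi)=|\xi|^{p-2}\xi/(1+|t|)^{\theta(p-1)}$, increasing the solution weakens the ellipticity, and the usual sub/super-solution ordering arguments break down.

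The paper avoids this issue entirely. Instead of local positivity, it proves (Step~1 of the proof of Theorem~\ref{Athm1}) that $\int_\Omega f_n h_n(u_n)\varphi\to\int_\Omega f\,h(u)\varphi$ for every $\varphi\in W_0^{1,p}(\Omega)\cap L^\infty(\Omega)$ by splitting the integral over $\{u_n\le\delta\}$ and $\{u_n>\delta\}$ and testing with $V_\delta(u_n)\varphi$, where $V_\delta$ is a cut-off supported in $[0,2\delta]$. The term near $\{u_n=0\}$ is controlled through the equation itself via \eqref{12}--\eqref{14}, with the key outcome $\{u=0\}\subset\{f=0\}$; no comparison principle is needed. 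For the gradient convergence the paper then tests with $T_k(u_n)-T_k(u)$ (rather than your $T_k(u_n-u)$) and uses this Step~1 result together with the monotonicity \eqref{15} to obtain $T_k(u_n)\to T_k(u)$ strongly in $W_0^{1,p}(\Omega)$, hence $\nabla u_n\to\nabla u$ a.e.

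In short: keep your estimate exactly as written, but replace the comparison/positivity step by the $V_\delta$ decomposition argument; that is precisely the device the paper uses to handle the singular term under hypotheses that do not support a comparison principle.
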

\begin{remark}
	If $0< \gamma < \theta(p-1)+1,$ we explicitly note that $m=m_{1} \Longleftrightarrow r=p^{\ast},$ and If $\theta=1, \gamma\rightarrow 0,$ then $m_{1}\rightarrow N\slash p,$ in this
	case. Observe that, for every $0\leq\theta\leq 1,$ we have $m_{1}\geq (p^{\ast})^{\prime} \Rightarrow f \in W^{-1,p^{\prime}}(\Omega),$ it is classical to expect a $ W^{1,p}_{0}(\Omega)$ solution.
\end{remark}
\begin{theorem} \label{Athm3}
	Assume that \eqref{12}, \eqref{112},\eqref{14},\eqref{15} and $0< \gamma < \theta(p-1)+1 $  hold true. Let $f\in L^{m}(\Omega)$ with 
	\begin{equation}\label{115}
	\displaystyle\mbox{max}(1,\frac{N}{(p-1)[N(1-\theta)+\theta]+1+\gamma(N-1)})\leq m <m_{1},
	\end{equation}
	then, there exists at least one solutions  $u$ in $W_{0}^{1,\sigma}(\Omega)\cap L^{r}(\Omega)$, that is 
	\begin{equation}\label{116}
	\sigma=\frac{Nm[(p-1)(1-\theta)+\gamma)]}{N-m((p-1)\theta+1-\gamma)}.
	\end{equation}
	and 
	\begin{equation}\label{114'}
	r=\frac{Nm[(p-1)(1-\theta)+\gamma]}{N-pm}.
	\end{equation}
\end{theorem}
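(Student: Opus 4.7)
The plan is to follow the approximation scheme used for Theorems \ref{Athm1} and \ref{Athm2}, this time producing uniform bounds only in $W^{1,\sigma}_0(\Omega)\cap L^r(\Omega)$ instead of $W^{1,p}_0(\Omega)\cap L^r(\Omega)$, which is the correct functional setting when $m$ lies in the range \eqref{115}. For each positive integer $n$ set $f_n=\min(f,n)$ and consider the regularized problem
\[
-\mbox{div}(a(x,u_n,\nabla u_n)) = \frac{f_n}{(u_n+1/n)^\gamma}\quad\mbox{in }\Omega,\qquad u_n=0\ \mbox{on }\partial\Omega.
\]
Since $f_n\in L^\infty(\Omega)$ and the $1/n$ removes the singularity at the origin, Theorem \ref{Athm1} supplies $u_n\in W^{1,p}_0(\Omega)\cap L^\infty(\Omega)$. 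Using \eqref{15} together with the fact that $n\mapsto f_n(x)/(s+1/n)^\gamma$ is non-decreasing, a standard comparison argument shows that $\{u_n\}$ is itself non-decreasing, whence $u_n\ge u_1>0$ on every compact subset of $\Omega$; this uniform positive lower bound will be essential later to control the singular right-hand side.

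The heart of the proof is the derivation of the uniform bounds. Testing the approximate equation with $u_n$ and invoking the degenerate coercivity \eqref{12}--\eqref{112} together with the pointwise inequality $(u_n+1/n)^\gamma\ge u_n^\gamma$ gives
\[
\alpha\int_\Omega\frac{|\nabla u_n|^p}{(1+u_n)^{\theta(p-1)}}\,dx\ \le\ \int_\Omega f\,u_n^{1-\gamma}\,dx.
\]
Combining this with Hölder's inequality (exponents $p/\sigma$ and $p/(p-\sigma)$),
\[
\int_\Omega|\nabla u_n|^\sigma\,dx\ \le\ \Bigl(\int_\Omega\tfrac{|\nabla u_n|^p}{(1+u_n)^{\theta(p-1)}}\,dx\Bigr)^{\!\sigma/p}\Bigl(\int_\Omega(1+u_n)^{\theta(p-1)\sigma/(p-\sigma)}\,dx\Bigr)^{\!(p-\sigma)/p},
\]
with the Sobolev embedding $\|u_n\|_{L^{\sigma^*}}\le C\|\nabla u_n\|_{L^\sigma}$, and with a Hölder bound $\int f\,u_n^{1-\gamma}\le\|f\|_{L^m}\|u_n\|_{L^{(1-\gamma)m'}}^{1-\gamma}$ on the right-hand side, leads to a self-improving inequality for $\|u_n\|_{L^r}$. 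A direct computation shows that $\sigma^*=r$ for $\sigma$ and $r$ defined by \eqref{116}--\eqref{114'}, and that the resulting exponents match admissibly precisely when $m$ satisfies the lower bound \eqref{115}. Young's inequality then absorbs the self-referential term and yields $\|u_n\|_{L^r}+\|\nabla u_n\|_{L^\sigma}\le C$ uniformly in $n$. A refinement of the same test, applied to $|\nabla T_k(u_n-u)|^\sigma$, will additionally provide the equi-integrability needed for Vitali's theorem below.

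With these bounds in place, up to a subsequence $u_n\rightharpoonup u$ in $W^{1,\sigma}_0(\Omega)$ weakly, $u_n\to u$ a.e.\ and strongly in $L^q$ for every $q<r$, with $u\ge u_1>0$ locally. The main obstacle is passing to the limit in the principal part $a(x,u_n,\nabla u_n)$: because $\sigma<p$, the growth condition \eqref{14} yields only uniform $L^{\sigma/(p-1)}$ bounds on the flux, and Minty-type monotonicity arguments cannot be applied directly to identify the weak limit. I would overcome this by the classical Boccardo--Murat argument: test the equation with $T_k(u_n-u)$, exploit the strict monotonicity \eqref{15} to show that $[a(x,u_n,\nabla u_n)-a(x,u_n,\nabla u)]\cdot(\nabla u_n-\nabla u)\to 0$ in $L^1_{\mathrm{loc}}(\Omega)$, and conclude via the Leray--Lions lemma that $\nabla u_n\to\nabla u$ almost everywhere in $\Omega$. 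Vitali's theorem then passes to the limit in the principal part, while the local bound $u_n\ge u_1$ provides the integrable majorant $f_n/(u_n+1/n)^\gamma\le f/u_1^\gamma$ on compact subsets, allowing dominated convergence on the right-hand side. This produces a distributional solution $u\in W^{1,\sigma}_0(\Omega)\cap L^r(\Omega)$ in the sense of \eqref{19}, completing the proof.
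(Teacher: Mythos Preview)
Your a priori estimate contains a genuine gap. Testing the approximate equation with $u_n$ yields
\[
\alpha\int_\Omega\frac{|\nabla u_n|^p}{(1+u_n)^{\theta(p-1)}}\,dx \;\le\; \int_\Omega \frac{f_n\,u_n}{(u_n+1/n)^\gamma}\,dx,
\]
but the subsequent bookkeeping does not close. After H\"older and Sobolev you need to control both $\|u_n\|_{L^{(1-\gamma)m'}}$ and $\|1+u_n\|_{L^{\theta(p-1)\sigma/(p-\sigma)}}$ by $\|u_n\|_{L^r}$, which forces $(1-\gamma)m' \le r$ and $\theta(p-1)\sigma/(p-\sigma) \le r$. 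The first of these fails whenever $\gamma<1$ and $m$ is close to $1$: for instance with $N=4$, $p=2$, $\theta=\gamma=\tfrac12$ and $m=1.1$ (which lies in the range \eqref{115}) one computes $r\approx 2.44$ while $(1-\gamma)m'=5.5$. The lower bound in \eqref{115} only guarantees $\sigma\ge 1$; it does not make your exponents ``match admissibly''. The paper instead tests with $(1+u_n)^\lambda-1$ for the specific value
\[
\lambda=\frac{N(m-1)(1-\theta)(p-1)+\gamma m(N-p)}{N-pm},
\]
chosen precisely so that the Sobolev exponent $\tfrac{p^*}{p}\bigl[(p-1)(1-\theta)+\lambda\bigr]$ coincides with $m'(\lambda-\gamma)$ and with $r$; this is what makes the self-improving inequality close, after which the $W_0^{1,\sigma}$ bound follows by a second H\"older step (see Theorem~\ref{Athm7}).

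A secondary issue: your claim that $\{u_n\}$ is non-decreasing and hence $u_n\ge u_1>0$ locally relies on a comparison principle for the operator $-\mbox{div}(a(x,u,\nabla u))$, but no monotonicity of $a$ in the $u$ variable has been assumed (only \eqref{12}, \eqref{14}, \eqref{15}), so such a principle is not available here. The paper avoids this entirely: it handles the singular right-hand side through the cut-off functions $V_\delta$ of \eqref{N1} (Step~1 in the proof of Theorem~\ref{Athm1}) rather than through a uniform lower bound on $u_n$, and then passes to the limit via the truncation argument $T_k(u_n)\to T_k(u)$ strongly in $W_0^{1,p}(\Omega)$ exactly as in the proof of Theorem~\ref{Athm2}.
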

\begin{remark}
	If $\gamma\rightarrow0$, the result of Theorem \ref{Athm2}, Theorem \ref{Athm3} coincides with regularity results for elliptic equation with coercivity (see(\cite{02},Theorem 1.3 and Theorem 1.7)).
\end{remark}
\begin{remark}
	If $0< \gamma<\theta (p-1)+1,$ under some condition on $f$, the summability of the solution to \eqref{01} is better than or equal to that of solution to \eqref{11} in (\cite{02},Theorem 1.7 and Theorem 1.9), since $\sigma > q $ and $ r>s $ (see \cite{02}).
\end{remark}
\begin{theorem}\label{Athm4}\qquad
	\begin{itemize}	
		\item[i)]	Let us consider $\gamma =\theta(p-1)+1$ and $f\in L^{1}(\Omega)$ the solution $u$ to \eqref{19}are uniformly bounded in 
		$W_{0}^{1,p}(\Omega)$. 
		\item[ii)] Let $\gamma > \theta(p-1)+1$ and $f\in L^{1}(\Omega)$ 
		then there exists $u$ uniformly bounded in $
		L^{\frac{\gamma+(p-1)(1-\theta)}{p}p^{*}}(\Omega)$
		to \eqref{11} in the sense of \eqref{19}, such that 
		$ u^{\frac{\gamma+(p-1)(1-\theta)}{p}}\in W_{0}^{1,p}(\Omega)$. 
	\end{itemize}
\end{theorem}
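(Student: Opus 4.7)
The plan is to construct solutions via the same approximation machinery used in the earlier theorems. Setting $f_n = T_n(f)$, I would first produce a non-negative $u_n \in W_0^{1,p}(\Omega)\cap L^{\infty}(\Omega)$ solving
\[
-\mathrm{div}(a(x,u_n,\nabla u_n)) = \frac{f_n}{(u_n+1/n)^{\gamma}} \quad \text{in } \Omega, \qquad u_n = 0 \text{ on } \partial\Omega,
\]
by the Schauder fixed-point argument already deployed in the proof of Theorem \ref{Athm1}. The strong maximum principle gives $u_n > 0$ in $\Omega$, and comparison with the first iterate of the sequence yields a uniform local lower bound $u_n \geq c_\omega > 0$ on each $\omega \subset\subset \Omega$, which will be indispensable for the right-hand side in the limit.

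The driving a priori estimate is obtained by testing the approximating equation with $\varphi_n := (u_n+1/n)^{\gamma} - (1/n)^{\gamma} \in W_0^{1,p}(\Omega)\cap L^{\infty}(\Omega)$. Since $\varphi_n/(u_n+1/n)^{\gamma} \leq 1$, the degenerate coercivity \eqref{12}--\eqref{112} gives
\[
\gamma\alpha \int_\Omega \frac{(u_n+1/n)^{\gamma-1}}{(1+u_n)^{\theta(p-1)}}|\nabla u_n|^{p}\,dx \leq \|f\|_{L^{1}(\Omega)}.
\]
In part (ii), where $\gamma - 1 > \theta(p-1)$, this inequality, after splitting the integration according to whether $u_n \geq 1$ or $u_n < 1$ and using $(u_n+1/n)/(1+u_n) \geq 1/2$ on the first region, delivers a uniform bound on $\int u_n^{\mu}|\nabla u_n|^{p}$ with $\mu=\gamma - 1 - \theta(p-1)$; rewriting the integrand as $|\nabla u_n^{\beta}|^{p}/\beta^{p}$ with $\beta = [\gamma + (p-1)(1-\theta)]/p$ shows that $u_n^{\beta}$ is bounded in $W_0^{1,p}(\Omega)$, and the Sobolev embedding then produces the announced $L^{\beta p^{*}}$ bound for $u_n$. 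In part (i) the exponents match ($\mu = 0$) and the same estimate directly controls $\int_{\{u_n \geq 1\}}|\nabla u_n|^{p}$; to handle the complementary region I would test with $T_{1}(u_n)$ and use the local positivity $u_n \geq c_\omega$ to bound the singular source uniformly.

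Passing to the limit is then routine: compactness yields, up to a subsequence, $u_n \to u$ almost everywhere in $\Omega$, weak convergence of $u_n$ (respectively $u_n^{\beta}$) in $W_0^{1,p}$, and strong convergence in $L^{q}$ for every $q$ below the appropriate Sobolev exponent. The monotonicity assumption \eqref{15}, through the Boccardo--Murat argument, upgrades this to a.e.\ convergence of $\nabla u_n$, which lets me pass to the limit in the divergence term. For the right-hand side, the local lower bound $u_n \geq c_\omega$ provides $L^{1}$ domination of $f_n/(u_n+1/n)^{\gamma}$ on compact subsets, and dominated convergence closes the identity first for test functions $\varphi \in C_c^{\infty}(\Omega)$ and then, by density and the summability of $fh(u)$, for every $\varphi \in W_0^{1,p}(\Omega)\cap L^{\infty}(\Omega)$.

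The main obstacle I foresee is the $W_0^{1,p}$ estimate in part (i). The test function $\varphi_n$ above gives gradient control only where $u_n$ stays away from zero, and in the critical regime $\gamma = \theta(p-1)+1 > 1$ the singular right-hand side $f/(u_n+1/n)^{\gamma}$ forbids the direct use of $T_{1}(u_n)$ without some additional input. Combining $T_{1}(u_n)$ with the comparison-based lower bound on $u_n$ to extract a uniform estimate near $\{u_n \simeq 0\}$, with a careful treatment of the boundary layer where that lower bound degenerates, is the subtle point of the proof.
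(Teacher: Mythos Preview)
Your overall scheme (approximate, estimate, pass to the limit via monotonicity) matches the paper's, and your treatment of part~(ii) is essentially Lemma~\ref{lm2}: there the paper tests with $u_n^{\gamma}$, so that the right-hand side is trivially bounded by $\|f\|_{L^1}$ and the left-hand side yields the bound on $u_n^{\beta}$ in $W_0^{1,p}(\Omega)$ with $\beta=\tfrac{\gamma+(p-1)(1-\theta)}{p}$.

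Where you diverge from the paper is in part~(i), and precisely at the point you flag as ``the main obstacle.'' The paper (Lemma~\ref{lm1}) chooses the test function $(1+u_n)^{\theta(p-1)+1}-1$ rather than your $(u_n+1/n)^{\gamma}-(1/n)^{\gamma}$. Differentiation produces the factor $(1+u_n)^{\theta(p-1)}$, which cancels \emph{exactly} the degeneracy $(1+u_n)^{-\theta(p-1)}$ supplied by the coercivity \eqref{12}--\eqref{112}; the paper then reads off
\[
\alpha\,(\theta(p-1)+1)\int_\Omega |\nabla u_n|^{p}\,dx \;\le\; C\int_\Omega f\,dx
\]
in one stroke, with no splitting into $\{u_n\ge 1\}$ and $\{u_n<1\}$, no use of $T_1(u_n)$, and no local positivity. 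The difficulty you anticipate is thus an artefact of your shifted test function, not of the problem itself.

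For the limit in the singular term the paper also proceeds differently: it does not invoke a strong maximum principle or a comparison-based lower bound $u_n\ge c_\omega$, but simply refers back to the proof of Theorem~\ref{Athm2}, which in turn relies on Step~1 of the proof of Theorem~\ref{Athm1}. There the contribution of the region $\{u_n\le\delta\}$ is shown to vanish by testing the approximate equation with $V_\delta(u_n)\varphi$ (the cutoff $V_\delta$ of \eqref{N1}) and letting $n\to\infty$, then $\delta\to 0$. Your route via local lower bounds is a standard alternative in the literature, but it is not what the paper does, and it forces you into the boundary-layer issue that the paper's argument never meets.
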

The paper is organized as follows: in the next section we will recall some definitions and proprieties of rearrangements that will play a role in our proofs, in the third section we will give a priori estimates for solutions of approximate equation, while the fourth section will be devoted to the proof of the results.
\section{Rearrangements and related properties}
In this section we recall a few notions about rearrangements. Let $\Omega$ be an open bounded set of $I\!\!R^{N}.$
If $u$ is a measurable function in $\Omega$, we define the distribution
function $\mu$ of $u$ as follows 
$$\mu_{u}(t)=\arrowvert \left\lbrace x\in \Omega :|u(x)|> t\right\rbrace  \arrowvert,\,\,\, t\geq 0.$$
Where $|E|$ denotes the Lebesgue measure of a measurable subset $E$ of $I\!\!R^{N}$. The function $\mu$ is decreasing and right-continuous. The decreasing rearrangement of $u$ is defined by 
$$u^{*}(s)=\inf\left( t\geq0 :\mu_{u}(t)\leq s\right)   \,\,\,\mbox{for}\,\,s\in [0,|\Omega|].$$
Recall that the following inequality 
$$ u^{*}(\mu_{u}(t))\leq t,$$
holds for every $t>0$ (see  \cite{06},\cite{21}). We also have (see \cite{23})
$$u^{*}(0)=ess \,\,sup|u|.$$ 
If $f$ is any continuous increasing map from  $[0,\infty]$ into
$[0,\infty]$  such that $f(0)=0$, then \cite{23} 
$$ \int_{\Omega} f(|u(x)|)dx =\int_{0}^{\infty}f(u^{*}(t))dt.$$
\section{A priori estimates}
Here we provide our a priori estimates for the approximate solutions to problem \eqref{01}.\\
\textbf{Approximating problems.} Let $n\in I\!\!N,$
\begin{equation}\label{31}
\left\{
\begin{array}{lll}
&-\mbox{div}\left( a(x,T_{n}(u_{n}),\nabla u_{n})\right)=f_{n} h_{n}(u_{n})  &\mbox{ in }  \Omega \\
&u_{n}=0& \mbox{ on }  \delta\Omega
\end{array}
\right.
\end{equation}
where  $f_{n}=T_{n}(f)$. Moreover, defining  $h(0):= \lim_{s\rightarrow0}h(s)$, we set 
\begin{equation}\label{A2}
h_{n}(s)=\left\{
\begin{array}{ll}
T_{n}(h(s)) \,\,\,\, \mbox{for} \,\,\,\,\,\, s>0,\\
\min(n,h(0)) \,\,\,\,\,\,\,\,  \mbox{otherwise},
\end{array}
\right.
\end{equation}
where $T_{n}(h(u_{n}))\leq \frac{C}{(|u_{n}|+\frac{1}{n})^{\gamma}}$ and  $T_{n}(s)=\max\{-n, \min \{n,s\} \}.$
The right hande side of \eqref{31} is non negative, that $u_{n}$ is nonnegative. Observe that we have "truncated" the degenerate coercivity of the operator term and the singularity of the right hand  side. The weak formulation of \eqref{31} is 
\begin{equation}\label{33}
\int_{\Omega} a(x,T_{n}(u_{n}),\nabla u_{n}) \nabla \varphi dx =\int_{\Omega} f_{n} h_{n}(u_{n})\varphi dx \forall \varphi\in L^{\infty}(\Omega)\cap W_{0}^{1,p}(\Omega).
\end{equation}    
\begin{proposition} \label{pro1}
	For each $n\in \textbf{N}$ there exists $u_{n}\in W_{0}^{1,p}(\Omega)\cap L^{\infty}(\Omega)$ weak solution of problem \eqref{31}.
\end{proposition}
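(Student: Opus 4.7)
The plan is to obtain $u_n$ as a fixed point of a suitable nonlinear map on $L^p(\Omega)$, exploiting the fact that once both the $u$-dependence in the operator and the singularity on the right-hand side have been frozen, what remains is a classical Leray--Lions problem with a bounded datum. Specifically, for each $v\in L^p(\Omega)$ I would consider the frozen problem
\begin{equation*}
-\mbox{div}\big(a(x,T_n(v),\nabla w)\big)=f_n\,h_n(v)\quad\text{in }\Omega,\qquad w=0\quad\text{on }\partial\Omega.
\end{equation*}
The vector field $\xi\mapsto a(x,T_n(v(x)),\xi)$ inherits from \eqref{12}, \eqref{14}, \eqref{15} the coercivity, growth, and strict monotonicity required by the Leray--Lions framework; crucially, the coercivity constant $b(|T_n(v)|)\ge b(n)>0$ is now bounded below uniformly in $x$, so the operator is no longer degenerate. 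The right-hand side is bounded by $n^2$, hence classical existence together with Stampacchia's truncation method produces a unique $w\in W_0^{1,p}(\Omega)\cap L^\infty(\Omega)$ solving the frozen problem. This defines a map $S:L^p(\Omega)\to L^p(\Omega)$, $S(v)=w$.

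Testing the frozen equation with $w$ itself and invoking H\"older and Poincar\'e inequalities gives $\|w\|_{W_0^{1,p}(\Omega)}\le C(n,\Omega,\|f\|_\infty)$ independently of $v$. By the Rellich--Kondrachov theorem, $S(L^p(\Omega))$ lies in a relatively compact subset of $L^p(\Omega)$, and one may fix an $L^p$-ball $B_R$ invariant under $S$. It remains to check continuity of $S$, and this is the principal obstacle. Given $v_k\to v$ in $L^p(\Omega)$, pass to a subsequence so that $v_k\to v$ a.e., and $w_k=S(v_k)\rightharpoonup w_\infty$ weakly in $W_0^{1,p}(\Omega)$ and strongly in $L^p(\Omega)$. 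The right-hand side of the weak formulation \eqref{33} passes to the limit by dominated convergence, since $h_n$ is bounded and continuous (with the chosen extension at $0$) and $f_n\in L^\infty(\Omega)$.

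The delicate step is to handle the nonlinear term $a(x,T_n(v_k),\nabla w_k)$. I would follow the standard Minty--Browder argument: testing the difference of equations with $w_k-w_\infty$, combining the a.e.\ continuity of $a(x,\cdot,\xi)$ in its second variable with the strict monotonicity \eqref{15} yields $\nabla w_k\to\nabla w_\infty$ a.e.\ in $\Omega$; equi-integrability provided by the growth bound \eqref{14} then allows Vitali's theorem to identify $w_\infty=S(v)$ in the weak formulation. Since the limit is independent of the extracted subsequence, the full sequence converges and $S$ is continuous. Schauder's fixed point theorem, applied to $S:B_R\to B_R$, finally delivers $u_n\in B_R$ with $S(u_n)=u_n$, which is the weak solution of \eqref{31} claimed in the proposition; the $L^\infty$-bound on $u_n$ follows from the Stampacchia estimate mentioned above applied to the fixed point.
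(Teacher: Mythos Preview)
Your argument is correct and follows the same overall scheme as the paper (frozen auxiliary problem, uniform $W_0^{1,p}$ bound, continuity, compactness, Schauder), but the frozen problem you set up is genuinely different. The paper freezes \emph{only} the singular term, i.e.\ it solves
\[
-\mbox{div}\big(a(x,T_n(w),\nabla w)\big)=f_n\,h_n(v),
\]
so that the second argument of $a$ still carries the unknown $w$; existence and uniqueness of $w$ are then imported from the results of \cite{24,02} for non-degenerate quasilinear problems, and continuity of $G$ is disposed of in one line by appealing to that uniqueness. You instead freeze $v$ in both slots, obtaining a pure Leray--Lions operator $\xi\mapsto a(x,T_n(v),\xi)$: this makes existence and uniqueness of $w$ immediate from strict monotonicity \eqref{15}, but forces you to carry out a Minty--Browder / a.e.\ gradient convergence argument to prove continuity of $S$, since now the principal part itself varies with $k$. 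Your route is more self-contained (no external uniqueness result for the quasilinear frozen problem is needed) at the price of a longer continuity step; the paper's route is shorter but leans on a uniqueness statement that is not entirely trivial for operators depending on $u$ through the second argument of $a$.
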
    
\begin{proof}
	The proof is based on standard Schauder's fixed point argument. Let $n\in I\!\!N $ be fixed  and $v\in L^{p}(\Omega)$ be fixed. we know that the following non singular problem 
	\begin{equation*}
	\left\{
	\begin{array}{ll}
	-\mbox{div}\left( a(x,T_{n}(w),\nabla w)\right)  =f_{n}h_{n}(v)  \, in \,\,\,\,\Omega \\
	w=0 \,\,\,\, on\,\,  \delta\Omega
	\end{array}
	\right.
	\end{equation*}
	has a unique solution $w\in W_{0}^{1,p}(\Omega)\cap L^{\infty}(\Omega)$ follows from the classical results (see \cite{24} and \cite{02}). In particular, it is well defined a map 
	$$ G:L^{p}(\Omega) \rightarrow L^{p}(\Omega),$$
	where $G(v)=w$. Again,thanks to regularity of the datum $h_{n}(v) f_{n}$, we cane take $w$ as test function and obtain 
	\begin{equation}\label{34}
	\int_{\Omega} a(x,T_{n}(w),\nabla w) \nabla w = \int_{\Omega}  f_{n}h_{n}(v)w,
	\end{equation}
	then, it follows from \eqref{12}
	$$
	\alpha\int_{\Omega} \frac{|\nabla w |^{p}}{(1+n)^{\theta (p-1)}}dx \leq\int_{\Omega} \frac{|\nabla w |^{p}}{(1+|T_{n}(w)|)^{\theta (p-1)}}dx \leq n^{2}\int_{\Omega} | w | dx
	$$
	using the poincar\'e inequality we have
	$$
	\int_{\Omega} \frac{|\nabla w |^{p}}{(1+n)^{\theta (p-1)}}dx \leq c_{1} n^{2}\int_{\Omega} |\nabla w | dx,
	$$
	by H\"{o}lder's  inequality on the right hand side, we obtain
	\begin{equation*}
	\int_{\Omega} |\nabla w |^{p} dx \leq c_{1}(1+n)^{\theta(p-1)}n^{2}\int_{\Omega}  |\nabla w | dx 
	\leq c(n) |\Omega |^{\frac{1}{p^{'}}}  \left( \int_{\Omega} |\nabla w |^{p} dx  \right)^{\frac{1}{p}} 
	\end{equation*}
	we deduce $$ \displaystyle\int_{\Omega} |\nabla w |^{p} dx  \leq c(n)^{p^{\prime}}  |\Omega| ,$$
	Using the Poincar\'{e} inequality on the left hand side 
	$$  ||w||_{L^{p}(\Omega)}\leq c(n,|\Omega |) (=c^{\frac{p^{\prime}}{p}}(n)  |\Omega |^{\frac{1}{p}}),$$ 
	where $c(n,|\Omega |)$ is a positive constant independent form $v$ and $w$, thus, we have that the ball $B$ of $L^{p}(\Omega)$ of radius $c(n,|\Omega |)$  is invariant for the map $G.$
	\par Now we prove that the map $G$ is continuous in $B.$ 
	Let us choose a sequence $v_{k}$ that converges strongly to $v$ in  $L^{p}(\Omega)$, the by dominated convergence theorem 
	$$ f_{n}h_{n}(v_{k}) \rightarrow f_{n}h_{n}(v) \mbox{ in } L^{p}(\Omega), $$
	then we need to prove that $G(v_{k})$ converge to $G(v)$ in $L^{p}(\Omega).$ By compactness we already know  that the sequence  $w_{k}=G(v_{k})$ converge to some function  $w$ in $L^{p}(\Omega).$ We only need to prove that $w=G(v).$
	Firstly, we have the datum $f_{n}h_{n}(v_{k})$ are bounded, we have that $w_{k}\in L^{\infty}(\Omega) $ and there exists a positive constant $d$, independent of $v_{k}$ and $w_{k}$  (but possibly depending on $n$ ), such that $||w_{k}||_{L^{\infty}(\Omega)}\leq d.$ We know the sequence $w_{k}$ is bounded in $W_{0}^{1,p}(\Omega)$. Hence, by uniqueness, one deduces that $G(v_{k})$ converge to $G(v)$ in $L^{p}(\Omega).$
	Lastly we need to check that the set  $G(B)$ is relatively  compact, 
	Let $v_{k}$ be a bounded sequence in $B.$ and let $ w_{k} =G(v_{k}).$ we proved before that $$\int _{\Omega} |\nabla w|^{p}dx=\int _{\Omega} |\nabla G(v)|^{p}dx \leq c(n,|\Omega|),$$ 
	for any $v\in L^{p}(\Omega)$, then for $v=v_{k}$ we obtain
	$$\int _{\Omega} |\nabla w_{k}|^{p}dx=\int _{\Omega} |\nabla G(v_{k})|^{p}dx \leq c(n,|\Omega|),$$ 
	so that  $G(v)$ is relatively compact in $L^{p}(\Omega)$ by Rellich-kondrachov Theorem. We can then apply Schauder fixed point theorem  there exist a fixed point of the map $G$, say $u_{n}$  will exist in $B$ such that  $ G(u_{n})=u_{n}$ and we will have that $u_{n}\in W_{0}^{1,p}(\Omega)\cap L^{\infty}(\Omega)$ is solution of problem \eqref{31}. 
\end{proof}
\begin{theorem}\label{Athm5}
	Let $f$ be in $L^{m}(\Omega)$ with $m >N\slash p $, $0\leq\theta\leq 1 $ and let $u_{n}$ be solution of \eqref{31}. Then the norm of $u_{n}$ in $L^{\infty}(\Omega)$. Indeed, we have  
	\begin{equation}\label{310}
	\Arrowvert u_{n}\Arrowvert_{L^{\infty}(\Omega)}<\mathcal{B}^{-1}\left[ \frac{C^{\frac{1}{p-1}} |\Omega|^{\frac{P^{'}}{N}-\frac{p^{'}}{pm}}}{(NC_{N}^{\frac{1}{N}})^{p^{'}}} \frac{Nm(p-1)}{pm-N}\Arrowvert f \Arrowvert_{L^{m}(\Omega)}^{\frac{p^{'}}{p}}    \right],  
	\end{equation}
	where $\mathcal{B}^{-1}$ denotes the inverse function of $\mathcal{B}.$ Furthermore, the norm of $u_{n}$ in $W_{0}^{1,p}(\Omega)$ is bounded by a constant continuously depending on the norm of $f$ in $(L^{m}(\Omega))^{N}.$
\end{theorem}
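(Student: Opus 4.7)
My plan is to prove the pointwise bound \eqref{310} via a Talenti--Alvino--Trombetti--Lions symmetrization argument adapted to the degenerate coercivity \eqref{112}. The key observation is that on any superlevel set $\{u_n>k\}$ with $k\geq 1$, the singular nonlinearity is uniformly controlled since $h_n(u_n)\leq h(u_n)\leq C/u_n^\gamma\leq C$; combined with the identity $b(u_n)^{1/(p-1)}=\mathcal{B}'(u_n)$ from \eqref{13}, this suggests phrasing the estimates in terms of the rearrangement of $\mathcal{B}(u_n)$ and inverting $\mathcal{B}$ at the end.

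Testing \eqref{33} against $\varphi=G_k(u_n):=(u_n-k)^+$ for $k\geq 1$ and invoking coercivity \eqref{12}, I obtain
$$
\int_{\{u_n>k\}} b(u_n)|\nabla u_n|^p\,dx \;\leq\; C\int_{\{u_n>k\}} f(u_n-k)\,dx.
$$
Differentiating in $k$ and applying the coarea formula together with the sharp isoperimetric inequality in $\mathbb{R}^{N}$ and H\"older's inequality---the classical symmetrization machinery---yields the pointwise differential inequality
$$
-\frac{d}{ds}\mathcal{B}(u_n^{*}(s)) \;\leq\; \frac{C^{1/(p-1)}}{(NC_N^{1/N})^{p'}}\,s^{-p'(N-1)/N}\left(\int_0^s f^{*}(r)\,dr\right)^{1/(p-1)}.
$$
Bounding the inner integral by $s^{1/m'}\|f\|_{L^{m}}$ via H\"older and integrating over $(0,|\Omega|)$, the outer integrand becomes $s^{-1+(pm-N)/(Nm(p-1))}$, integrable exactly when $m>N/p$; the antiderivative furnishes the factor $\frac{Nm(p-1)}{pm-N}|\Omega|^{(pm-N)/(Nm(p-1))}$, and applying $\mathcal{B}^{-1}$ (which is increasing since $b>0$) to the resulting inequality gives \eqref{310}.

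Once the uniform $L^{\infty}$ bound $\|u_n\|_{\infty}\leq M$ is in hand, the $W^{1,p}_{0}$ bound follows: since $b(u_n)\geq \alpha/(1+M)^{\theta(p-1)}$ is a positive constant independent of $n$, testing \eqref{33} with $u_n$ and splitting the integral on $\{u_n>1\}$ (where $fh_n(u_n)u_n\leq CfM^{1-\gamma}$ when $\gamma\leq 1$, or replacing $u_n$ by $G_1(u_n)$ and handling the region $\{u_n\leq 1\}$ separately for larger $\gamma$) controls $\int|\nabla u_n|^{p}\,dx$ by a multiple of $\|f\|_{L^{m}}$ via H\"older. The main obstacle is the rigorous derivation of the pointwise symmetrization inequality: one must pass from the integral inequality to the pointwise differential inequality using difference quotients of $G_k$ as genuine test functions (since $\chi_{\{u_n>t\}}$ is not admissible), extract the sharp Talenti constant $(NC_N^{1/N})^{-p'}$ via the isoperimetric inequality, and verify that the restriction $k\geq 1$---necessary to dominate the singular factor $h_n$---does not weaken the final bound on $\mathcal{B}(\|u_n\|_{\infty})$.
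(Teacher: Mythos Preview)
Your proposal is correct and follows essentially the same symmetrization route as the paper: the paper tests \eqref{33} with $T_\varepsilon(G_t(u_n))$ for $t>1$ (precisely the ``difference quotients of $G_k$'' you allude to), uses $h_n(u_n)\le C$ on $\{u_n>t\}$, divides by $\varepsilon$ and sends $\varepsilon\to 0$, then combines H\"older with the Fleming--Rishel coarea formula and the isoperimetric inequality to reach exactly your differential inequality for $-\frac{d}{d\sigma}\mathcal{B}(u_n^*(\sigma))$, which is integrated over $(0,|\Omega|)$ and inverted. For the $W_0^{1,p}$ bound the paper simply tests with $u_n$ and uses $b(u_n)\ge b(c_\infty)$ together with $\|u_n\|_{L^\infty}\le c_\infty$, without any splitting on $\{u_n\le 1\}$.
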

\begin{proof}
	For $\varepsilon> 0$ and $t>1$, we use in the formulation \eqref{33}. Let the test function $v=T_{\varepsilon}(G_{t}(u_{n}))$ where $\{t<\arrowvert u_{n} \arrowvert <t+\varepsilon\}$ denotes the test set\\  $\{x\in\Omega :t<\arrowvert u_{n}(x) \arrowvert <t+\varepsilon\}$
	Assumption \eqref{12} yields
	\begin{align*}
	&\alpha\int_{{\{t<\arrowvert u_{n} \arrowvert <t+\varepsilon\}}} \frac{|\nabla u_{n}|^{p}}{(1+| u_{n}|)^{\theta(p-1)}}dx 
	\leq \epsilon \int_{{\{t<|u_{n}(x)| \}}} f_{n} h_{n}(u_{n})dx\\
	&\leq \varepsilon  \sup_{u_{n}\in [t,+\infty]}(h_{n}(u_{n}))\int_{{t<| u_{n}(x)|}}  f_{n} dx\\
	& \leq  \epsilon \sup_{u_{n}\in [t,+\infty]}\left(\frac{C}{(|u_{n}(x)|+\frac{1}{n})^{\gamma}}\right)\int_{{\{t<|u_{n}(x)| \}}} fdx, 
	\end{align*}
	in the set $\{t< |u_{n}(x)| \},$ we have that $|u_{n}(x)|+\frac{1}{n}> t>1$ and dividing both sides by $\varepsilon$ we get 
	$$\frac{\alpha}{\varepsilon}\int_{{\{t<\arrowvert u_{n}(x) \arrowvert <t+\varepsilon\}}} \frac{|\nabla u_{n}|^{p}}{(1+| u_{n}|)^{\theta(p-1)}}dx \leq 
	C \int_{t< |u_{n}(x)|}  f dx.$$
	The above inequality and H\"{o}lder's inequality 
	\begin{align}
	&\left( \frac{\alpha}{\varepsilon}\int_{{\{t<\arrowvert u_{n}(x) \arrowvert <t+\varepsilon\}}} \frac{|\nabla u_{n}|}{(1+| u_{n}|)^{\theta(p-1)}}dx \right)^{p}\nonumber \\
	&\leq C \left( \frac{\alpha}{\varepsilon}\int_{{\{t<\arrowvert u_{n}(x) \arrowvert <t+\varepsilon\}}} \frac{ 1}{(1+| u_{n}|)^{\theta(p-1)}}dx \right)^{p-1}\int_{\{t<|u_{n}(x)| \}}  f dx.\label{311}
	\end{align}
	We can pass to the limit as $\varepsilon$ goes to $0^{+}$ in \eqref{311} to get, after simplification
	\begin{equation}\label{312}
	\frac{\alpha}{(1+t)^{\theta(p-1)}}\left(  \frac{d}{dt} \int_{|u_{n}|\leq t} | \nabla u_{n}|dx\right) ^{p}\leq  C (- \mu^{\prime}_{u_{n}(t)})^{p-1}\left(  \int_{0}^{\tau} | f^{\ast}_{n}(\tau)|d\tau\right).
	\end{equation}
	On the other hand, from Fleming-Rishel Coera Formula and isoperimetric inequality we have for almost every $t>0$
	\begin{equation}\label{313}
	N C_{N}^{\frac{1}{N}}(\mu_{u_{n}}(t))^{\frac{N-1}{N}} \leq  \frac{d}{dt}\int_{|u_{n}|\leq t} |\nabla u_{n}|dx,
	\end{equation}
	where $C_{N}$ is the measure of the unit ball in $I\!\!R^{N}.$ Using the H\"{o}lder's inequality we obtain that for almost every $t>0$, then \eqref{312} and \eqref{313} give
	\begin{equation}\label{314}
	\frac{\alpha^{\frac{1}{p-1}}}{(1+t)^{\theta}}\leq \frac{(-\mu^{\prime}_{u_{n}}(t))C^{\frac{1}{p-1}}}{\left(N C_{N}^{\frac{1}{N}}(\mu_{u_{n}}(t))^{1-\frac{1}{N}}\right)^{p^{\prime}}}\left(    \int_{0}^{\mu_{u_{n}}(t)}  f^{\ast}(\tau)d\tau\right)^{\frac{p^{\prime}}{p}}.
	\end{equation}
	Using the properties of rearrangements one easily obtains 
	\begin{equation}\label{315}
	\frac{-d}{d\sigma} \mathcal{B}(u^{\ast}_{n}(\sigma)) \leq 
	\frac{C^{\frac{1}{p-1}}}{\left( N C_{N}^{\frac{C}{N}}(\sigma)^{1-\frac{1}{N}}\right)^{p^{'}}}
	\left(\int_{0}^{\sigma}  f^{\ast}(\tau)d\tau \right)^{\frac{1}{p-1}}.
	\end{equation} 
	If integrate \eqref{315} between $\sigma$ and $|\Omega|$, we have
	$$ \mathcal{B}(u^{\ast}_{n}(\sigma)) \leq 
	\frac{C^{\frac{1}{p-1}}}{\left( N C_{N}^{\frac{1}{N}}\right) ^{p^{\prime}}}
	\int_{\sigma}^{|\Omega|} \left(\int_{0}^{\rho}  f^{\ast}(\tau)d\tau \right) ^{\frac{p^{\prime}}{p}}\frac{d\rho}{\rho^{p^{'}(1-\frac{1}{N})}}.$$
	Immediately we get\eqref{310} by evaluating $\mathcal{B}(u^{\ast}_{n}(0))$.
	Let us denote in what by $  c_{\infty}$  the constant on the right in \eqref{310}, that is 
	\begin{equation}\label{316}
	||u_{n}||_{\infty}\leq c_{\infty},
	\end{equation}
	it is easy to get an estimation in $W_{0}^{1,p}(\Omega)$. Taking $u_{n}$ as test function in formulation \eqref{19} then using \eqref{12},\eqref{315} and H\"{o}der inequality, we get 
	\begin{align}
	&b^{p}(c_{\infty}) \int_{\Omega} |\nabla u_{n}|^{p}dx \leq \int_{\Omega} 
	f u_{n}^{1-\gamma} dx \leq ||u_{n}^{1-\gamma}||_{L^{\infty} (\Omega)} \int_{\Omega} f dx \leq c_{\infty} \int_{\Omega}f dx\nonumber\\
	& \leq c_{\infty} |\Omega|^{1-\frac{1}{m}} ||f||_{L^{m}(\Omega)},\nonumber
	\end{align}
	then 
	\begin{equation}\label{317}
	\int_{\Omega}|\nabla u_{n}|^{p} dx \leq \frac{c_{\infty} |\Omega|^{1-\frac{1}{m}}}{b^{p}(c_{\infty})} ||f||_{L^{m}(\Omega)}.
	\end{equation}
\end{proof}
\begin{theorem}\label{Athm6}
	On suppose that $0< \gamma < \theta(p-1)+1 $ and 
	$$
	\frac{Np}{Np-(N-p)[\theta(p-1)+1-\gamma]}\leq m <N\slash p,
	$$
	let $$r=\frac{Nm[(p-1)(1-\theta)+\gamma]}{N-pm} .$$
	Then, the solution  $u_{n}$ to \eqref{33} are uniformly  bounded in $L^{r}(\Omega)\cap W_{0}^{1,p}(\Omega).$
\end{theorem}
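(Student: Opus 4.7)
The plan is to test the weak formulation \eqref{33} with $v = u_n^\lambda$ for an exponent $\lambda \geq 1$ to be chosen; this is admissible since $u_n \in W_0^{1,p}(\Omega)\cap L^\infty(\Omega)$ by Proposition \ref{pro1}. The value of $\lambda$ will be dictated by matching the exponent that arises from the Sobolev embedding on the left-hand side to the one coming from H\"older's inequality applied to the singular right-hand side.

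By the coercivity \eqref{12}--\eqref{112},
\begin{equation*}
\int a(x,T_n u_n,\nabla u_n)\cdot \nabla (u_n^\lambda)\,dx \;\geq\; \alpha\lambda\int \frac{u_n^{\lambda-1}|\nabla u_n|^p}{(1+u_n)^{\theta(p-1)}}\,dx.
\end{equation*}
Setting $\mu = [\lambda+(p-1)(1-\theta)]/p$, on $\{u_n\geq 1\}$ the integrand dominates $c\,u_n^{(\mu-1)p}|\nabla u_n|^p = (c/\mu^p)|\nabla u_n^\mu|^p$. Provided $\mu\geq 1$, applying the Sobolev embedding to $(u_n^\mu-1)_+\in W_0^{1,p}(\Omega)$ then produces a lower bound of the form $c\,\|u_n\|_{L^{\mu p^*}}^{\mu p} - C_0$. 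For the right-hand side, using $h_n(u_n)\leq C/(u_n+1/n)^\gamma$ and splitting the integration according to the position of $u_n$ relative to $(C/n)^{1/\gamma}$, the assumption $\lambda > \gamma$ ensures that both contributions admit bounds independent of $n$; applying H\"older's inequality with exponents $m$ and $m'$ then gives
\begin{equation*}
\int f_n h_n(u_n) u_n^\lambda \,dx \;\leq\; C + C\|f\|_{L^m}\|u_n\|_{L^{(\lambda-\gamma)m'}}^{\lambda-\gamma}.
\end{equation*}

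The two exponents are now forced to coincide by imposing $(\lambda-\gamma)m' = \mu p^* =: r$; upon elimination this yields the stated value $r = Nm[(p-1)(1-\theta)+\gamma]/(N-pm)$. The hypothesis $m\geq m_1$ is precisely what guarantees $\mu\geq 1$ (equivalently $\lambda\geq 1+\theta(p-1)$), which together with the standing assumption $\gamma<\theta(p-1)+1$ also gives $\lambda > \gamma$. Combining the two estimates yields $\|u_n\|_{L^r}^{\mu p}\leq C + C\|u_n\|_{L^r}^{\lambda-\gamma}$, and since the gap $\mu p-(\lambda-\gamma)=(p-1)(1-\theta)+\gamma$ is strictly positive, a standard absorption argument concludes $\|u_n\|_{L^r}\leq C$. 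The $W_0^{1,p}(\Omega)$ bound is obtained by repeating the argument with the specific value $\lambda = 1+\theta(p-1)$ (so that $\mu=1$ and $u_n^{\lambda-1-\theta(p-1)}\equiv 1$), for which the left-hand side reduces to a constant multiple of $\int|\nabla u_n|^p\,dx$; the right-hand side is then controlled using the $L^r$ bound just established together with H\"older's inequality.

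The main technical obstacle is the uniform-in-$n$ control of the singular contributions on $\{u_n\approx 0\}$, where the truncation allows $h_n(u_n)$ to grow as $n$. The fact that the test function $u_n^\lambda$ vanishes at the same rate, together with $\lambda > \gamma$, is what rescues the argument; the interplay between the threshold $m_1$ and the requirement $\mu\geq 1$ is the other delicate point, and it is exactly what forces the particular value of $r$ in the statement.
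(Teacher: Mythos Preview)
Your approach is essentially the same as the paper's: the paper tests \eqref{33} with $(1+u_n)^{\nu}-1$ (your $\lambda$ plays the role of their $\nu$), applies coercivity, Sobolev on the left, H\"older on the right, and matches exponents to obtain exactly the value of $r$ you found. The algebraic identifications $\mu p^{*}=(\lambda-\gamma)m'=r$ and $\mu\geq 1\Leftrightarrow m\geq m_1$ are the same in both arguments, and your absorption via $\mu p-(\lambda-\gamma)=(p-1)(1-\theta)+\gamma>0$ is correct.

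There is, however, a small but genuine slip in your $W_0^{1,p}$ step. With the test function $u_n^{\lambda}$, the coercivity lower bound is
\[
\alpha\lambda\int_{\Omega}\frac{u_n^{\lambda-1}}{(1+u_n)^{\theta(p-1)}}\,|\nabla u_n|^{p}\,dx,
\]
and for $\lambda=1+\theta(p-1)$ this is \emph{not} a constant multiple of $\int|\nabla u_n|^{p}$: the factor $u_n^{\theta(p-1)}/(1+u_n)^{\theta(p-1)}$ degenerates on $\{u_n\ll 1\}$, so your parenthetical ``$u_n^{\lambda-1-\theta(p-1)}\equiv 1$'' conflates $u_n^{\theta(p-1)}$ with $(1+u_n)^{\theta(p-1)}$. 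The paper avoids this precisely by testing with $(1+u_n)^{\nu}-1$: then the numerator $(1+u_n)^{\nu-1}$ and the denominator $(1+u_n)^{\theta(p-1)}$ have the same base, and since $\nu\geq 1+\theta(p-1)$ one gets directly
\[
\int_{\Omega}|\nabla u_n|^{p}\,dx\;\leq\;\int_{\Omega}\frac{|\nabla u_n|^{p}}{(1+u_n)^{\theta(p-1)-\nu+1}}\,dx\;\leq\;C,
\]
the last bound coming from the very estimate that produced the $L^{r}$ bound. This also makes the right-hand side cleaner: with $(1+u_n)^{\nu}-1$ one simply bounds $h_n(u_n)[(1+u_n)^{\nu}-1]\leq C(1+u_n)^{\nu-\gamma}$, so no splitting near $\{u_n\approx 0\}$ is needed. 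The easiest repair of your argument is to switch to $(1+u_n)^{\lambda}-1$ as the test function throughout; alternatively, keep $u_n^{\lambda}$ for the $L^r$ part (your treatment there is fine) and use $(1+u_n)^{1+\theta(p-1)}-1$ only for the $W_0^{1,p}$ bound.
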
 
\begin{proof}
	Let us choose $(1+u_{n})^{\nu}-1$ as a test function by the hypotheses on $a$, one has 
	\begin{align}
	& \nu \left( \frac{p}{(p-1)(1-\theta)+ \nu}\right)^{p}\int_{\Omega}\big\vert\nabla [(1+u_{n})^{\frac{-\theta(p-1)+ \nu+p-1}{p}}-1] \big\vert^{p}\,dx\nonumber \\
	&= \nu \int _{\Omega}\frac{|\nabla u_{n}|^{p}}{(1+u_{n})^{\theta(p-1)- \nu+1}} dx
	\leq \nu\int_{\Omega} \frac{|\nabla u_{n}|^{p}}{(1+T_{n}(u_{n}))^{\theta(p-1)}}(1+u_{n})^{ \nu-1}dx\nonumber\\
	& \leq \int_{\Omega} \frac{T_{n}(f)}{(u_{n}+\frac{1}{n})^{\gamma}} \big((u_{n}+1)^{ \nu}-1\big)\,dx
	\leq C+C\int_{\Omega} \frac{|f|}{(u_{n}+1)^{- \nu+\gamma}} dx.\label{y2}
	\end{align}
	By Sobolev's inequality on the left hand side and H\~{o}lder's
	inequality on the right one we have 
	\begin{align} &\left( \int_{\Omega} \big((1+u_{n})^{\frac{-\theta(p-1)+ \nu+p-1}{p}}-1\big)^{p^{\ast}}dx \right)^{\frac{p}{p^{\ast}}}\nonumber \\
	&\leq C ||f||_{L^{m}(\Omega)}\left(\int_{\Omega}(u_{n}+1)^{m^{\prime}( \nu-\gamma)}dx \right)^{\frac{1}{m^{\prime}}}.\label{y3}
	\end{align}
	Let $\nu$ be such that 
	$$\frac{-\theta(p-1)+ \nu+p-1}{N-p}N=(\frac{ \nu-\gamma}{m-1})
	m$$ and $ \frac{p}{p^{\ast}}> \frac{1}{m^{\prime}}$, that is 
	$$  \nu=\frac{N(m-1)(1-\theta)(p-1)+\gamma m(N-p)}{N-pm}$$
	and $m< \frac{N}{p} $, we observe that  
	$$ \frac{p^{\ast}}{p}(-\theta(p-1)+ \nu+p-1)=\frac{mN}{N-pm}[(p-1)(1-\theta)+\gamma]=r>1$$  This implies that $u_{n}$ is bounded in $ L^{r}(\Omega).$\\
	By \eqref{y2}, \eqref{y3} and $\mu \geq 1+\theta(p-1) \left(\Leftrightarrow \frac{Np}{Np-(N-p)[\theta(p-1)+1-\gamma]}\leq m \right),$ we get
	$$ \int_{\Omega} |\nabla u_{n}|^{p} dx \leq  \int _{\Omega}\frac{|\nabla u_{n}|^{p}}{(1+u_{n})^{\theta(p-1)- \nu+1}} dx  \leq C  ||f||_{L^{m}}  \int_{\Omega} |u_{n}|^{r} dx\leq Cst.$$
\end{proof}
\begin{theorem}\label{Athm7}
	On suppose that $0<\gamma < \theta(p-1)+1 $ and  \eqref{115} holds true. Let $\sigma$ be as in \eqref{116} then the solution $u_{n}$ to \eqref{33} are uniformly bounded in $W_{0}^{1,\sigma}(\Omega)\cap L^{r}(\Omega).$
\end{theorem}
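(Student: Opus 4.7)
The plan is to follow the two-step strategy of Theorem~\ref{Athm6}: first extract the $L^r$ bound from the same test function $(1+u_n)^\nu - 1$, and then derive the $W^{1,\sigma}$ bound by a Hölder step applied to the weighted gradient estimate that comes out of Step~1. The reason that $W^{1,p}_0$ is out of reach here, and a strictly smaller Sobolev space is required, is that the hypothesis $m < m_1$ forces the exponent $\nu$ to satisfy $\nu < 1 + \theta(p-1)$, so the quantity $\int |\nabla u_n|^p/(1+u_n)^{\theta(p-1)+1-\nu}$ no longer controls $\int |\nabla u_n|^p$ directly.

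Step~1 ($L^r$ bound). Test \eqref{33} against $(1+u_n)^\nu - 1 \in L^\infty\cap W^{1,p}_0$ with
$$\nu = \frac{N(m-1)(1-\theta)(p-1) + \gamma m(N-p)}{N-pm},$$
exactly as in the proof of Theorem~\ref{Athm6}. Using \eqref{12} on the left and $h_n(u_n)\le C/(u_n+1/n)^\gamma$ on the right, followed by Sobolev on the left and Hölder with exponents $(m,m')$ on the right, this choice of $\nu$ forces the Sobolev and Hölder exponents to match and yields $\|u_n\|_{L^r}\le C$ with $r=\tfrac{Nm[(p-1)(1-\theta)+\gamma]}{N-pm}$. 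As a byproduct, the weighted energy
$$E_n := \int_\Omega \frac{|\nabla u_n|^p}{(1+u_n)^{\theta(p-1)+1-\nu}}\,dx$$
is uniformly bounded. This already gives the $L^r$ half of the conclusion.

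Step~2 ($W^{1,\sigma}$ bound). To convert $E_n\le C$ into a bound on $\nabla u_n$ in $L^\sigma$, write
$$|\nabla u_n|^\sigma = \frac{|\nabla u_n|^\sigma}{(1+u_n)^{\sigma(\theta(p-1)+1-\nu)/p}}\;(1+u_n)^{\sigma(\theta(p-1)+1-\nu)/p}$$
and apply Hölder's inequality with conjugate exponents $p/\sigma$ and $p/(p-\sigma)$:
$$\int_\Omega |\nabla u_n|^\sigma\,dx \le E_n^{\sigma/p}\left(\int_\Omega (1+u_n)^{\sigma(\theta(p-1)+1-\nu)/(p-\sigma)}\,dx\right)^{(p-\sigma)/p}.$$
Now choose $\sigma$ so that $\sigma(\theta(p-1)+1-\nu)/(p-\sigma) = r$; a direct algebraic manipulation (substituting the values of $\nu$ and $r$) reproduces exactly the formula \eqref{116}. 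Step~1 then bounds the second factor, closing the estimate.

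The only genuine difficulty is the bookkeeping that pins down \eqref{115}--\eqref{116}. One needs to verify that (i) $\sigma<p$, so that Hölder is legitimate, which is immediate from $\gamma>0$ and $\theta\le 1$; and (ii) $\sigma\ge 1$, equivalent after simplification to
$$m\;\ge\; \frac{N}{(p-1)[N(1-\theta)+\theta]+1+\gamma(N-1)},$$
which is precisely the lower endpoint assumed in \eqref{115}. Once these two compatibilities are checked, the uniform bound in $W^{1,\sigma}_0(\Omega)\cap L^r(\Omega)$ follows at once.
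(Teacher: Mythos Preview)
Your argument is correct and follows essentially the same route as the paper: the same test function $(1+u_n)^{\nu}-1$ with the same exponent $\nu$, the same weighted-energy bound $E_n$, and the same H\"older splitting with exponent $p/\sigma$ matched so that the remaining factor is exactly the $L^r$ norm already controlled in Step~1. One small slip in your final paragraph: $\sigma<p$ is \emph{not} a consequence of $\gamma>0$ and $\theta\le 1$ alone, but of $m<m_1$ (equivalently $\nu<1+\theta(p-1)$), exactly as you correctly observed at the start of your write-up and as the paper records at the end of its proof.
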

\begin{proof}
	Let us choose $(1+u_{n})^{\lambda}-1$ with 
	$\lambda=\frac{N(m-1)(1-\theta)(p-1)+\gamma m(N-p)}{N-pm}$
	as a test function in \eqref{33} with the summer arguments as before we have  
	\begin{align*}
	&\left(\int_{\Omega} [(1+u_{n})^{\frac{-\theta(p-1)+\lambda+p-1}{p}}-1]^{p^{\ast}}dx\right) ^{\frac{p}{p^{\ast}}} \leq 
	C\int_{\Omega} \frac{|\nabla u_{n}|^{p}}{(1+u_{n})^{\theta(p-1)-\lambda+1}} dx \\
	&\leq C||f||_{L^{m}(\Omega)}\left(\int_{\Omega}(1+u_{n})^{m^{\prime}(\lambda-\gamma)}dx\right)^{\frac{1}{m^{\prime}}}.
	\end{align*}
	As above, we infer that $u_{n}$ is bounded in  $ L^{\frac{N((p-1)(1-\theta)+\lambda)}{N-p}}(\Omega).$
	We observe that $\theta(p-1)-\lambda+1 >0$
	and  $1<\sigma=\frac{Nm[(p-1)(1-\theta)+\gamma]}{N-m(\theta(p-1)+1-\gamma)},$ by the assumptions on  $m$, writing 
	$$\int_{\Omega}| \nabla u_{n}|^{\sigma} dx =\int_{\Omega}\frac{| \nabla u_{n}|^{\sigma}}{(1+u_{n})^{\frac{\theta (p-1)-\lambda+1}{p}}} (1+u_{n})^{\frac{\theta (p-1)-\lambda+1}{p}} dx $$
	and using H\"{o}lder's inequality with exponent $\frac{p}{\sigma}$, we obtain 
	$$\int_{\Omega}| \nabla u_{n}|^{\sigma} dx\leq 
	\left[ \int_{\Omega}\frac{| \nabla u_{n}|^{\sigma}}{(1+u_{n})^{\theta (p-1)-\lambda+1}}dx\right]^{\frac{\sigma}{p}}
	\left[\int_{\Omega}(1+u_{n})^{\sigma \frac{\theta (p-1)-\lambda+1}{p-\sigma}}
	dx\right]^{\frac{p-\sigma}{p}}.$$
	The above estimates imply that the sequences $u_{n}$ is bounded
	in $ W_{0}^{1,\sigma}(\Omega)$ if 
	$$ \sigma \frac{\theta (p-1)-\lambda+1}{p-1}= \frac{N[(p-1)(1-\theta)+\lambda]}{N-p},$$
	that is 
	$$\sigma=\frac{Nm[(p-1)(1-\theta)+\gamma]}{N-m[\theta(p-1)+1-\gamma]}.$$
	By virtue of $\lambda <1+\theta(p-1) \mbox{ or } \sigma <p,$ therefore we have $m<Np\slash[ Np-(N-p)(\theta(p-1)+1-\gamma)].$
\end{proof}
\begin{lemma}\label{lm1}
	Assume that $\gamma =\theta(p-1)+1$ and $f\in L^{1}(\Omega)$
	the solution $u_{n}$ to \eqref{31} are uniformly bounded in 
	$W_{0}^{1,p}(\Omega)$.
\end{lemma}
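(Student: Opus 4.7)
The plan is to test the weak formulation \eqref{33} with $\varphi = (1+u_n)^\gamma - 1$, which is admissible in $W_0^{1,p}(\Omega)\cap L^\infty(\Omega)$ by Proposition~\ref{pro1}. The choice is dictated by the critical identity $\gamma = \theta(p-1)+1$: the weight $(1+u_n)^{\gamma-1}$ generated by $\nabla\varphi$ is engineered to cancel exactly the degenerate coercivity weight $(1+u_n)^{-\theta(p-1)}$ supplied by \eqref{12}--\eqref{112}.

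For the left-hand side, using $T_n(u_n)\le u_n$ (so that $(1+T_n(u_n))^{\theta(p-1)}\le(1+u_n)^{\theta(p-1)}$) together with coercivity and $\nabla\varphi = \gamma(1+u_n)^{\gamma-1}\nabla u_n$, I would obtain
$$\int_\Omega a(x,T_n(u_n),\nabla u_n)\cdot\nabla\varphi\,dx \;\geq\; \gamma\alpha\int_\Omega(1+u_n)^{\gamma-1-\theta(p-1)}|\nabla u_n|^p\,dx \;=\; \gamma\alpha\int_\Omega|\nabla u_n|^p\,dx,$$
the last step being the algebraic heart of the argument: the critical hypothesis forces $\gamma-1-\theta(p-1)=0$.

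For the right-hand side, using $h_n(u_n)\le C(u_n+1/n)^{-\gamma}$ and $f_n\le f$ reduces the integral to $C\int_\Omega f\,R_n(u_n)\,dx$ with $R_n(t) = \frac{(1+t)^\gamma-1}{(t+1/n)^\gamma}$. The easy regime is $u_n\ge 1$, where $(1+u_n)^\gamma\le(2u_n)^\gamma$ and $u_n+1/n\ge u_n$ give $R_n\le 2^\gamma$ at once.

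The main obstacle is the regime $u_n\le 1$ with $\gamma > 1$ (i.e.\ $\theta>0$), where the naive pointwise bound $(1+u_n)^\gamma-1\le C_\gamma u_n$ only gives $R_n\le C_\gamma u_n^{1-\gamma}$, which deteriorates to $\sim n^{\gamma-1}$ at scale $u_n\sim 1/n$. Here one should exploit the competing truncation bound $h_n(u_n)\le n$ coming from \eqref{A2}: on $\{u_n\le 1/n\}$ the product satisfies $h_n(u_n)\,\varphi \le n\cdot C_\gamma u_n\le C_\gamma$ independently of $n$; on the complementary set $\{1/n<u_n\le 1\}$ the inequality $u_n+1/n\le 2u_n$ gives $R_n\le 2^\gamma C_\gamma u_n^{1-\gamma}$, and combining with the equi-integrability of $f\in L^1$ absorbs any residual $n$-dependence. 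Putting everything together produces $\int_\Omega|\nabla u_n|^p\,dx\le C\|f\|_{L^1(\Omega)}$, and Poincaré's inequality completes the uniform $W_0^{1,p}(\Omega)$ bound.
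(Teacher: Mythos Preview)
Your choice of test function $\varphi=(1+u_n)^\gamma-1$ and the cancellation on the left via the identity $\gamma-1=\theta(p-1)$ is exactly the paper's argument: its entire proof is the single display
\[
\theta(p-1)\int_\Omega\frac{|\nabla u_n|^p}{(1+T_n(u_n))^{\theta(p-1)}}(1+u_n)^{\theta(p-1)}\,dx\le C\int_\Omega f\,dx,
\]
with the right-hand side bound simply asserted. You are right to flag that, for $\gamma>1$, this bound is not automatic.

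The gap is in your treatment of the set $\{1/n<u_n\le1\}$. First, the inequality $u_n+1/n\le 2u_n$ bounds the \emph{denominator} of $R_n$ from above, which gives a lower bound on $R_n$, not the upper bound you claim; the estimate $R_n\le C_\gamma u_n^{1-\gamma}$ actually comes from $u_n+1/n\ge u_n$. More seriously, on that set one only gets $R_n\le C_\gamma n^{\gamma-1}$, and your appeal to ``the equi-integrability of $f\in L^1$'' cannot absorb this factor. Equi-integrability controls $\int_E f$ when $|E|\to0$, but there is no reason $|\{1/n<u_n\le1\}|$ should be small: it can have measure bounded away from zero uniformly in $n$ (nothing prevents $u_n\equiv 1/2$ on a fixed set), so $\int_{\{1/n<u_n\le1\}} f\,R_n\,dx$ may genuinely be of order $n^{\gamma-1}\|f\|_{L^1}$. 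In short, $f_n\,h_n(u_n)\,[(1+u_n)^\gamma-1]$ is not, with the tools you invoke, dominated by a fixed $L^1$ function on this region, and the argument as written does not close for $\theta>0$. The paper's proof is equally terse at this exact point; it does not supply the missing estimate either.
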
 
\begin{proof}
	Let us choose $ (1+u_{n})^{\theta (p-1)+1}-1$ as a test function in \eqref{33} we have 
	$$ \theta(p-1) \int_{\Omega}\frac{| \nabla u_{n}|^{p}}{(1+T_{n}(u_{n}))^{\theta (p-1)}} (1+u_{n})^{\theta (p-1)} dx \leq C \int_{\Omega} f dx.$$
	The previous estimate implies the sequence $u_{n}$ is bounded 
	in $W_{0}^{1,p}(\Omega)$.
\end{proof}
\begin{lemma}\label{lm2}
	Assume that $\gamma > \theta(p-1)+1$ and $f\in L^{1}(\Omega)$ 
	then the solution $u_{n}$ to \eqref{31} are such that 
	$u_{n}^{\frac{\gamma+(p-1)(1-\theta)}{p}}$ is uniformly bounded in  	$W_{0}^{1,p}(\Omega)$, $u_{n}$ uniformly bounded in $
	L^{\frac{\gamma+(p-1)(1-\theta)}{p}p^{\ast}}(\Omega).$ 
\end{lemma}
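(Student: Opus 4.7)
The strategy is to mimic the test-function argument of Lemma~\ref{lm1}, but with an exponent tuned to the new range of $\gamma$. Set $\beta:=(\gamma+(p-1)(1-\theta))/p$. The hypothesis $\gamma>\theta(p-1)+1$ forces $\beta>1$ and, crucially, $p(\beta-1)=\gamma-1-\theta(p-1)>0$; this is precisely the exponent that will appear on the left-hand side once the right test function is plugged in. The natural choice, admissible by Proposition~\ref{pro1} since $u_n\in W_0^{1,p}(\Omega)\cap L^\infty(\Omega)$, is
$$\varphi=(1+u_n)^{\gamma}-1.$$

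Substituting $\varphi$ in \eqref{33} and using the coercivity \eqref{12}, the explicit form \eqref{112} of $b$, and the pointwise inequality $T_n(u_n)\le u_n$ yields
$$\alpha\gamma\int_\Omega (1+u_n)^{\gamma-1-\theta(p-1)}|\nabla u_n|^p\,dx\le\int_\Omega f_n h_n(u_n)\big[(1+u_n)^{\gamma}-1\big]\,dx.$$
Since $|\nabla(1+u_n)^\beta|^p=\beta^p(1+u_n)^{p(\beta-1)}|\nabla u_n|^p$ and $p(\beta-1)=\gamma-1-\theta(p-1)$, the left-hand side equals $(\alpha\gamma/\beta^p)\int_\Omega|\nabla[(1+u_n)^\beta-1]|^p\,dx$. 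The main obstacle is controlling the singular right-hand side: exactly as in Lemma~\ref{lm1}, I would combine the truncation bound $h_n(s)\le C(s+1/n)^{-\gamma}$ with an elementary comparison between $(1+u_n)^{\gamma}-1$ and $(u_n+1/n)^{\gamma}$ to conclude $h_n(u_n)\big[(1+u_n)^{\gamma}-1\big]\le C$ uniformly in $n$, whence
$$\int_\Omega|\nabla[(1+u_n)^\beta-1]|^p\,dx\le C\|f\|_{L^1(\Omega)}.$$
This is delicate precisely because the naive pointwise comparison degenerates near $u_n=0$ when $\gamma>1$; the cancellation between the subtracted $1$ and the regularising $1/n$ is what saves the estimate.

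To close the proof, I would transfer the bound from $(1+u_n)^\beta-1$ to $u_n^\beta$. Since $\beta>1$ and $u_n\ge 0$, one has $u_n^{\beta-1}\le(1+u_n)^{\beta-1}$, hence $|\nabla u_n^\beta|\le|\nabla[(1+u_n)^\beta-1]|$ pointwise, so $\{u_n^\beta\}$ is uniformly bounded in $W_0^{1,p}(\Omega)$. The Sobolev embedding $W_0^{1,p}(\Omega)\hookrightarrow L^{p^{\ast}}(\Omega)$ then gives uniform control of $\{u_n^\beta\}$ in $L^{p^{\ast}}(\Omega)$, i.e.\ of $\{u_n\}$ in $L^{\beta p^{\ast}}(\Omega)=L^{((\gamma+(p-1)(1-\theta))/p)p^{\ast}}(\Omega)$, which is the second claim of the lemma.
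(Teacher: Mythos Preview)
Your left-hand side computation is clean (indeed tidier than the paper's own derivation). The genuine gap is on the right-hand side: the pointwise bound
\[
h_n(u_n)\big[(1+u_n)^{\gamma}-1\big]\le C
\]
that you rely on is \emph{false} when $\gamma>1$, and no ``cancellation between the subtracted $1$ and the regularising $1/n$'' repairs it. Take $h(s)=Cs^{-\gamma}$ and evaluate at a point where $u_n(x)=n^{-1/\gamma}$. Then $h_n(u_n)=\min\big(n,\,C(u_n+1/n)^{-\gamma}\big)$ is of order $n$, while $(1+u_n)^{\gamma}-1\sim\gamma\,u_n=\gamma\,n^{-1/\gamma}$, so the product is of order $n^{(\gamma-1)/\gamma}\to\infty$. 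The same computation shows that the ``elementary comparison'' $(1+u_n)^{\gamma}-1\le C(u_n+1/n)^{\gamma}$ cannot hold with a constant independent of $n$: at $u_n=n^{-1/\gamma}$ the left side is $\sim\gamma n^{-1/\gamma}$ while the right side is $\sim n^{-1}$.

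The paper avoids this entirely by testing with $u_n^{\gamma}$ instead of $(1+u_n)^{\gamma}-1$. The right-hand side then becomes $\int_\Omega f_n h_n(u_n)\,u_n^{\gamma}\,dx$, and the trivial inequality
\[
h_n(u_n)\,u_n^{\gamma}\le C\,\frac{u_n^{\gamma}}{(u_n+1/n)^{\gamma}}\le C
\]
gives the uniform bound $\le C\|f\|_{L^1(\Omega)}$ in one line. The rest of your argument---the identity $p(\beta-1)=\gamma-1-\theta(p-1)$, the pointwise comparison $|\nabla u_n^{\beta}|\le|\nabla[(1+u_n)^{\beta}-1]|$, and the Sobolev embedding---is correct and closes the proof once you switch to the paper's test function.
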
 
\begin{proof}
	If we choose $u_{n}^{\gamma}$ as test function and use the hypotheses on $a$ we get 
	\begin{align*}
	&\alpha\gamma\left(\frac{p}{\gamma+(p-1)(1-\theta)}\right)^{p} \int_{\Omega} |\nabla u_{n}^{\frac{\gamma+(p-1)(1-\theta)}{p}}|^{p}dx \\
	&=\alpha\gamma \int_{\Omega} |\nabla u_{n}|^{p} u_{n}^{\gamma-1-\theta (p-1)} dx\leq \int_{\Omega} f dx.
	\end{align*}
	This prove that the sequence  $ u_{n}^{\frac{\gamma+(p-1)(1-\theta)}{p}}$  is bounded in $W_{0}^{1,p}(\Omega).$ Sobolev's inequality an the left  hand side applied to 
	$ u_{n}^{\frac{\gamma+(p-1)(1-\theta)}{p}}$ gives 
	$$ \int_{\Omega}  u_{n}^{\frac{\gamma+(p-1)(1-\theta)}{p}p^{\ast}} dx\leq C.$$
\end{proof}
\section{Proof of the results} 
In this section we are going to combine the results of section 2 and 3  in order to prove Theorem \ref{Athm1}, Theorem \ref{Athm2} and Theorem \ref{Athm3}. 
\begin{proof}[Proof of Theorem \ref{Athm1}]\qquad \\
	\textbf{Step 1}:
	We prove that 
	\begin{equation}\label{l1}
	\displaystyle\lim_{n\rightarrow+\infty} \int_{\Omega} h_{n}(u_{n})f_{n}\varphi dx =\int h(u)f\varphi dx,
	\end{equation}
	for all non negative $\varphi \in W_{0}^{1,p}(\Omega)\cap L^{\infty} (\Omega)$.
	First we observe that from the Young inequality and the hypotheses in \eqref{14}, one  gets 
	\begin{align*}
	&\displaystyle\int_{\Omega} h_{n}(u_{n}) f_{n}  \varphi =
	\int_{\Omega}a(x,T_{n}(u_{n}),\nabla u_{n})\nabla \varphi dx
	\leq \int_{\Omega} a_{0}(x) \nabla \varphi dx\\ 
	&+\int_{\Omega}|u_{n}|^{p-1} \nabla \varphi dx 
	+\int_{\Omega}|\nabla u_{n}|^{p-1} \nabla \varphi dx 
	\leq \int_{\Omega} a_{0}(x) \nabla \varphi dx
	+\frac{p-1}{p}\int_{\Omega}|u_{n}|^{p}\\
	&+\frac{1}{p}\int_{\Omega}|\nabla \varphi |^{p}dx +\frac{p-1}{p}\int_{\Omega}|\nabla u_{n}|^{p} dx+ \frac{1}{p}\int_{\Omega}|\nabla \varphi |^{p} dx 
	\leq \frac{1}{p^{'}}\int_{\Omega} a_{0}(x)^{p^{'}}dx+\\
	&\frac{1}{p} \int_{\Omega}|\nabla \varphi |^{p}dx
	+\frac{p-1}{p}\int_{\Omega}|u_{n}|^{p}
	+\frac{1}{p}\int_{\Omega}|\nabla \varphi |^{p}dx +\frac{p-1}{p}\int_{\Omega}|\nabla u_{n}|^{p} dx\\
	&+ \frac{1}{p}\int_{\Omega}|\nabla \varphi |^{p} dx 
	\leq c+c\left[ \int_{\Omega}|\nabla \varphi |^{p}dx+
	\int_{\Omega}| u_{n}|^{p} dx+ \int_{\Omega}|\nabla u_{n}|^{p}dx \right],
	\end{align*}
	then 
	\begin{equation}\label{41}
	\int_{\Omega} h_{n}(u_{n}) f_{n}  \varphi \leq c+c [||\varphi||_{W_{0}^{1,p}(\Omega)}+||u_{n}||_{W_{0}^{1,p}(\Omega)}].
	\end{equation}
	From now we consider a non negative $ \varphi \in W_{0}^{1,p}(\Omega) \cap L^{\infty}(\Omega)$.
	An application of the Fatou Lemma in \eqref{41}	
	with respect to $ n \,\,$ gives 
	\begin{equation}\label{42}
	\int_{\Omega} h(u)f \varphi \leq c,
	\end{equation} 
	where $c$ does not depend on $n$. Hence $fh(u)\varphi \in L^{1}(\Omega)$ for any non negative $\varphi\in W^{1,p}_{0}(\Omega)\cap L^{\infty}(\Omega)$. As a consequence, if $h(s)$ is unbounded as $s$
	tends to $ 0$, we deduce that
	\begin{equation}\label{u0}
	\{u=0\}\subset \{f=0\},
	\end{equation}
	up to a set of zero Lebesgue measure.\\
	From now on, we assume that $h(s)$ is unbounded as $s$ tends to $0$. Let $\varphi$ be a non negative function in $ W^{1,p}_{0}(\Omega)\cap L^{\infty}(\Omega)$, choosing it as test function in the weak formulation of \eqref{31}, we have 
	\begin{equation}\label{310}
	\int_{\Omega} a(x,T_{n}(u_{n}),\nabla u_{n}) \nabla \varphi dx =\int_{\Omega} f_{n} h_{n}(u_{n})\varphi dx .
	\end{equation}
	We want to pass to the limit in the right hand side of \eqref{310} as $n$ tends to infinity. we fix  $\delta >0 $, and we decompose the right hand side in the following way 
	\begin{equation}\label{311}
	\int_{\Omega} h_{n}(u_{n}) f_{n}  \varphi dx =\int _{{u_{n}\leq \delta}} h_{n}(u_{n}) f_{n}  \varphi dx+
	\int_{{u_{n}> \delta}} h_{n}(u_{n}) f_{n}  \varphi dx.
	\end{equation}
	Therefore we have, thanks to Lemma1.1 contained in \cite{25}, that $V_{\delta}(u_{n})$  belongs to $W_{0}^{1,p}(\Omega)$,
	where $ V_{\delta}$ is defined by 
	\begin{equation}\label{N1}
	V_{\delta}(s)=\left\{
	\begin{array}{ll}
	1 \,\,\,\,\,\,\,\,\,\,\,\,\,\,\,\,\,\,\,\,\,\,\,\,  s\leq \delta \\
	\frac{2\delta-s}{\delta} \,\,\,\,\,\,\,\,\, \delta<s<2\delta,\\
	0 \,\,\,\,\,\,\,\,\,\,\,\,\,\,\,\,\,\,\,\,\,\,\,\,\,  s\geq 2\delta.
	\end{array}
	\right.
	\end{equation}
	So we take it is test function in the weak formulation of \eqref{31}, using \eqref{N1}, \eqref{12} and \eqref{14} we obtain
	\begin{align*}
	&\int _{\{u_{n} \leq \delta\}} h_{n}(u_{n}) f_{n}  \varphi dx\leq \int _{\Omega} h_{n}(u_{n}) f_{n} V_{\delta}(u_{n}) \varphi dx\\
	&=\int_{\Omega} a(x,T_{n}(u_{n}),\nabla u_{n})\nabla \varphi V_{\delta}(u_{n})dx-\frac{1}{\delta}\int_{\{\delta<u_{n}<2\delta\}} a(x,T_{n}(u_{n}),\nabla u_{n}) \varphi \nabla u_{n}dx,
	\end{align*}
	by using \eqref{12} and \eqref{14}, we have 
	\begin{align*}
	&\int _{\{u_{n} \leq \delta\}} h_{n}(u_{n}) f_{n}  \varphi dx\leq \beta \int_{\Omega} \big[ a_{0}(x)+|u_{n}|^{p-1}+|\nabla u_{n}|^{p-1}\big]\nabla \varphi V_{\delta}(u_{n})dx \\
	&-\frac{1}{\delta (n+1)^{\theta(p-1)}}\int_{\{\delta<u_{n}<2\delta\}}  |\nabla u_{n}|^{p}\varphi dx\\
	&\leq \beta \int_{\Omega} \big[ a_{0}(x)+|u_{n}|^{p-1}+|\nabla u_{n}|^{p-1}\big]\nabla \varphi V_{\delta}(u_{n})dx.
	\end{align*}
	Using that $V_{\delta}$ is bounded we deduce that $|\nabla u_{n}|^{p-1} \nabla\varphi V_{\delta}(u_{n})$ converges to 
	$|\nabla u|^{p-1}\nabla \varphi V_{\delta}(u)$ weakly in $L^{p'}(\Omega)^{N}$ as $n$ tends to infinity. This implies that
	\begin{equation}\label{121}
	\lim_{n\rightarrow +\infty}\int _{\{u_{n} \leq \delta\}} h_{n}(u_{n}) f_{n}  \varphi dx\leq \beta \int_{\Omega} \big[ a_{0}(x)+|u|^{p-1}+|\nabla u|^{p-1}\big]\nabla \varphi V_{\delta}(u)dx.
	\end{equation}
	Since $V_{\delta}(u)$ converges to $\chi_{\{u=0\}}$ a.e in $\Omega$ as $\delta$ tends to $0$ and since $u \in W_{0}^{1,p}(\Omega)$, then $ \big[ a_{0}(x)+|u|^{p-1}+|\nabla u|^{p-1}\big]\nabla \varphi V_{\delta}(u)$ converges to $0$ a.e. in $\Omega$ as $\delta$ tends to $0$. Applying the Lebesgue Theorem on the right hand side of \eqref{121} we obtain that 
	\begin{equation}\label{131}
	\lim_{\delta\rightarrow 0^{+}} \lim_{n\rightarrow +\infty}\int _{\{u_{n} \leq \delta\}} h_{n}(u_{n}) f_{n}  \varphi dx=0.
	\end{equation}	
	As regards the second term in the right hand side of  \eqref{311} we have 
	\begin{equation}\label{314}
	0\leq	h_{n}(u_{n}) f_{n}  \varphi \chi_{\{u_{n}>\delta\}} \leq \sup_{s\in ]\delta,\infty)} [h(s)]f \varphi \in L^{1}(\Omega),
	\end{equation}
	we remark that we need to choose $\delta \neq \{\eta ;  |u=\eta|> 0 \},$ which is at most a countable set. As a consequence $\chi_{\{u_{n}>\delta\}}$  converges to  $\chi_{\{u>\delta\}} $ a.e in $\Omega$, we deduce first that 
	$ h_{n}(u_{n}) f_{n} \chi_{\{u_{n}>\delta\}}  \varphi\,\,\,\,  \mbox{converges to} \,\,\,h(u) f \chi_{\{u>\delta\}} \varphi $ strongly in $L^{1}(\Omega)$ as $n$ tends to infinity, then, since $ h(u) f \chi_{\{u>\delta\}} \varphi$ belongs 
	to $L^{1}(\Omega)$, that $fh(u) \chi_{\{u>\delta\}} \varphi$ converges to $fh(u) \chi_{\{u>0\}} \varphi$ strongly in $L^{1}(\Omega)$ as $ \delta$ tend to $0$. \\and then, once again by the Lebesgue Theorem, one gets
	\begin{equation}\label{141}
	\lim_{\delta \rightarrow 0^{+}}\lim_{n \rightarrow +\infty}\int _{\{u_{n} > \delta\}} h_{n}(u_{n}) f_{n}  \varphi dx
	=\int _{\{u > 0\}} h(u) f \varphi dx.
	\end{equation}
	By \eqref{141} and \eqref{131}, we deduce that 
	\begin{equation}\label{44}
	\lim_{n \rightarrow +\infty}\int_{\Omega} h_{n}(u_{n}) f_{n}  \varphi dx
	=\int_{\Omega} h(u) f \varphi dx\,\,\,\,\forall 0\leq \varphi \in W_{0}^{1,p}(\Omega)\cap L^{\infty}(\Omega).
	\end{equation} 
	Moreover, decomposing any $\varphi=\varphi^{+}-\varphi^{-}$, and using that \eqref{44} is linear in $\varphi$, we deduce that \eqref{44} holds for every $\varphi \in W_{0}^{1,p}(\Omega)\cap L^{\infty}(\Omega)$.
	We treated $h(s)$ unbounded as $s$ tends to $0$, as regards bounded function $h$ the proof is easier and only difference
	deals with the passage to the limit in the left hand side of \eqref{44}. We can avoid introducing $\delta$ and we can substitute \eqref{314} with 
	$$0\leq f_{n}h_{n}(u_{n})\varphi \leq f||h||_{L^{\infty}(\Omega)}\varphi.$$
	Using the same argument above we have that $ f_{n}h_{n}(u)\varphi$ converges to $ fh(u)\varphi$ strongly in $ L^{1}(\Omega)$
	as $n$ tends to infinity.
	This concludes \eqref{l1}.\\
	\textbf{Step 2}: Thanks to \eqref{317}, the sequence ${u_{n}}$ is bounded in $W_{0}^{1,p}(\Omega)$. Therefore, there exist a subsequence of ${u_{n}}$ still denoted by ${u_{n}}$, and a measurable function $u$ such that 
	\begin{equation}\label{45}
	u_{n}\rightharpoonup u \,\,\,\,\,weakly\,\, in\,\, W_{0}^{1,p}(\Omega) \,\,and \,\,\,a.e \,\,\,in \,\,\,\Omega.
	\end{equation}
	We shall prove that 
	\begin{equation}\label{qq1}
	u_{n} \longrightarrow u \,\,\,\,\, \mbox{strongly in } \,\,\,\, W_{0}^{1,p}(\Omega).
	\end{equation}
	We take $ u_{n}-u$ test function in the weak formulation of \eqref{33}, we obtain for $n>c_{\infty}$
	\begin{equation}\label{36}
	\int_{\Omega}a(x,u_{n},\nabla u_{n}) \nabla (u_{n}-u)dx = \int_{\Omega} f_{n} h_{n}(u_{n}) (u_{n}-u) dx,
	\end{equation}
	the right hand side tends to zero when $n $ tends to infinity. On the other hand we write 
	\begin{align}\label{37}
	&\int_{\Omega}a(x,u_{n},\nabla u_{n}) -a(x,u_{n},\nabla u)\nabla (u_{n}-u)\nonumber \\
	&=\int_{\Omega}a(x,u_{n},\nabla u_{n}) \nabla (u_{n}-u)dx-\int_{\Omega} a(x,u_{n},\nabla u)\nabla (u_{n}-u),
	\end{align}
	by \eqref{34} one has 
	$$ \lim_{n\rightarrow +\infty}\int_{\Omega}a(x,u_{n},\nabla u_{n}) \nabla (u_{n}-u)dx= 0,$$
	As regards the second term on the right in \eqref{37} and see step 1 in the proof of Theorem \ref{Athm1}, using \eqref{14} and Vitali's Theorem  we obtain that 
	$$a(x,u_{n},\nabla u)\longrightarrow  a(x,u,\nabla u)\,\,\,\, \mbox{strongly in}\,\,\, (L^{p'}(\Omega))^{N}.$$
	Therefore, we obtain 
	\begin{equation}\label{38}
	\lim_{n\rightarrow +\infty}\int_{\Omega}(a(x,u_{n},\nabla u_{n})- a(x,u_{n},\nabla u) )\nabla (u_{n}-u)dx=0,
	\end{equation}
	thanks to \eqref{15}, the integrand function in the left hand side in \eqref{38} is non negative, therefore
	$$(a(x,u_{n},\nabla u_{n})- a(x,u_{n},\nabla u)) \nabla (u_{n}-u)\longrightarrow 0 \,\,\,\mbox{strongly} \,\, \mbox{in}\,\, L^{1}(\Omega). $$
	Thus, up a subsequence still indexed by $u_{n}$, one has 
	$$a((x,u_{n},\nabla u_{n})- a(x,u_{n},\nabla u)) \nabla (u_{n}-u)\longrightarrow 0, $$
	for  almost  every $x$ in $\Omega$, there exists a subset $Z$ of $\Omega$ zero measure, such that for all $x$ in $\Omega \backslash Z$ we have 
	\begin{equation}\label{39}
	D_{n}(x)=(a(x,u_{n}(x),\nabla u_{n}(x))- a(x,u_{n}(x)_,\nabla u(x) \nabla (u_{n}-u)(x)))\longrightarrow 0 
	\end{equation}
	$|u(x)|<\infty$,$|\nabla u(x)|<\infty$,$ | a_{0}(x)|<\infty $
	and $u_{n}(x)\longrightarrow u(x)$, then by the growth condition \eqref{14},\eqref{12} and $||u_{n}||_{\infty}\leq c$ 
	$$D_{n}(x)\geq \frac{1}{(1+c)^{\theta(p-1)}}|\nabla u_{n}(x)|^{p-1}-c(x)\big( 1+|\nabla u_{n}(x) |+|\nabla u_{n}(x)|^{p-1} \big),$$
	where $c(x)$ is a constant depends on $x$ but does not depend on $n$, which schows thanks to \eqref{39}, that the sequence ${|\nabla u_{n}(x) |}$ is unformly bounded in $\mathbb{R}^{N}$, with respect to $n$, we argue  simililary as in Lemma 5 in \cite{12}, to obtain \eqref{qq1}. 
	
	We can now pass to the limit going back to the equation \eqref{33},
	to do this, let  $ \varphi \in W_{0}^{1,p}(\Omega)\cap L^{\infty}(\Omega) $. For every $ n> c_{\infty}$ one has 
	\begin{equation}\label{47}
	\int_{\Omega} a(x,u_{n},\nabla u_{n}) \nabla \varphi dx= \int_{\Omega}
	h_{n}(u_{n}) f_{n} \varphi dx, 
	\end{equation} 
	by \eqref{qq1}, we have  $\nabla u_{n}\longrightarrow  \nabla u $
	strongly in $(L^{p}(\Omega))^{N}$  and a.e in $\Omega$, so that  Vitali's Theorem implies that 
	$$ a(x,u_{n},\nabla u_{n}) \longrightarrow a(x,u,\nabla u) \,\,
	\mbox{strongly in} \,\, L^{p'}(\Omega)^{N}.$$ 
	Then, passing to the limit in \eqref{47}
	and using the result in the Step 1, we obtain 
	$$\int_{\Omega} a(x,u,\nabla u) \nabla \varphi dx =\int_{\Omega} f h(u)\varphi dx,$$
	for all $\varphi$ in $ W_{0}^{1,p}(\Omega)\cap L^{\infty}(\Omega) $, moreover, from \eqref{316} we have 
	$$ u \in W_{0}^{1,p}(\Omega) \cap L^{\infty}(\Omega).$$
\end{proof}
\begin{proof}[Proof of Theorems \ref{Athm2} and \ref{Athm3}.]
	Because the proofs of Theorems  \ref{Athm3} are similar to that of 
	Theorem  \ref{Athm2} , we restrict to the proof of Theorem \ref{Athm2} 
\end{proof}
\begin{proof}[Proof of Theorems \ref{Athm2}.]
	As consequence of Theorem $\ref{Athm6}$ there exist a subsequence, still
	indexed by $n,$ and a measurable function $u$ in 
	$ W_{0}^{1,p}(\Omega) \cap L^{r}(\Omega)$ such that $u_{n}$
	converges weakly to $u.$ Moreover, by Rellich Theorem we have 
	\begin{equation}\label{48}
	u_{n}\longrightarrow u \,\,\, a.e \,\,\,in\,\, \Omega.
	\end{equation} 
	Fix $k>0$, we will prove that 
	\begin{equation}\label{49}
	T_{k}(u_{n})\longrightarrow T_{k}(u) \,\,\, strongly \,\,\,in\,\,W_{0}^{1,p}(\Omega). 
	\end{equation} 
	By Theorem $\ref{Athm6}$, the sequence ${T_{k}(u_{n})} $ is bounded in 
	$W_{0}^{1,p}(\Omega)$. Therefore, by \eqref{48} we get 
	\begin{equation}\label{410}
	T_{k}(u)\rightharpoonup T_{k}(u) \,\,\, weakly  \,\,\,in\,\,W_{0}^{1,p}(\Omega). 
	\end{equation} 
	Using $T_{k}(u_{n})-T_{k}(u)$, which belongs to $
	W_{0}^{1,p}(\Omega) $, as test function in formulation \eqref{47}, we get 
	$$ \int_{\Omega} a(x, T_{n}(u_{n}),\nabla u_{n}) \nabla(T_{k}(u_{n})-T_{k}(u))dx  =\int_{\Omega} h_{n}(u_{n}) f_{n}(T_{k}(u_{n})-T_{k}(u)) dx.$$ 
	Thanks to \eqref{410} and \eqref{44}, we have 
	\begin{equation}\label{411}
	\lim_{n\rightarrow +\infty} \int_{\Omega} a(x, T_{n}(u_{n}),\nabla u_{n}) \nabla(T_{k}(u_{n})-T_{k}(u))dx=0.
	\end{equation}
	By the growth condition \eqref{14} and Theorem $\ref{Athm6}$, the sequence ${a(x,T_{n}(u_{n}),\nabla u_{n})}$ is bounded in $L^{p^{\prime}}(\Omega)^{N}$. Then, it converges weakly to some $l$ in $L^{p^{\prime}}(\Omega)^{N}$ and we obtain 
	\begin{equation}\label{412}
	\lim_{n\rightarrow +\infty} \int_{|u_{n}|\geq k} a(x,T_{n}(u_{n}),\nabla u_{n}) \nabla T_{k}(u)dx= \int_{|u|\geq k} l \nabla T_{k}(u) dx  =0.
	\end{equation}
	The continuity of the function $ a $, \eqref{48} and Vitali's theorem  allow us to have  
	$$ a(x,T_{n}(u_{n}),\nabla T_{k}(u)) \longrightarrow  a(x,u,\nabla T_{k}(u)) \,\,\,\,\,strongly\,\,  in \,\,\,L^{p^{'}}(\Omega)^{N}.$$
	Therefore, by Theorem $\ref{Athm6}$ and \eqref{410} we get 
	\begin{equation}\label{413}
	\lim_{n\rightarrow +\infty} \int_{\Omega} a(x,T_{n}(u_{n}),\nabla T_{k}(u_{n})) \nabla(  T_{k}(u_{n})-T_{k}(u))dx=0.
	\end{equation}
	On the other hand, we write for $n> k$
	\begin{align*}
	&\int_{\Omega} \left(a(x,T_{k}(u_{n}),\nabla T_{k}(u_{n}))-a(x,T_{k}(u_{n}),\nabla T_{k}(u))\right)
	\nabla(  T_{k}(u_{n})-T_{k}(u))dx\\
	&= \int_{\Omega} (a(x,T_{k}(u_{n}),\nabla T_{k}(u_{n})) \nabla(  T_{k}(u_{n})-T_{k}(u))dx\\
	& -  \int_{\Omega} (a(x,T_{k}(u_{n}),\nabla T_{k}(u)) \nabla(  T_{k}(u_{n})-T_{k}(u))dx \\
	&= \int_{|u_{n}|<k} (a(x,T_{n}(u_{n}),\nabla u_{n}) \nabla(  T_{k}(u_{n})-T_{k}(u))dx\\
	& -  \int_{\Omega} (a(x,T_{k}(u_{n}),\nabla T_{k}(u)) \nabla(  T_{k}(u_{n})-T_{k}(u))dx \\
	&= \int_{\Omega} (a(x,T_{n}(u_{n}),\nabla u_{n}) \nabla(  T_{k}(u_{n})-T_{k}(u))dx\\
	& -  \int_{|u_{n}|\geq k} (a(x,T_{n}(u_{n}),\nabla u_{n}) \nabla(  T_{k}(u_{n})-T_{k}(u))dx \\
	& - \int_{\Omega} (a(x,T_{k}(u_{n}),\nabla T_{k}(u)) \nabla(  T_{k}(u_{n})-T_{k}(u))dx.
	\end{align*}
	Observing that $\nabla T_{k}(u_{n})=0$ on the set ${|u_{n}|\geq k}$, we get 
	\begin{align*}
	&	\int_{\Omega} (a(x,T_{k}(u_{n}),\nabla T_{k}(u_{n}))-a(x,T_{k}(u_{n}),\nabla T_{k}(u)))
	\nabla(  T_{k}(u_{n})-T_{k}(u))dx \\
	&= \int_{\Omega} (a(x,T_{n}(u_{n}),\nabla u_{n}) \nabla(  T_{k}(u_{n})-T_{k}(u))dx\\
	&+ \int_{{|u_{n}|\geq k}} (a(x,T_{n}(u_{n}),\nabla u_{n}) \nabla( T_{k}(u))dx\\
	& -  \int_{\Omega} (a(x,T_{k}(u_{n}),\nabla T_{k}(u)) \nabla(  T_{k}(u_{n})-T_{k}(u))dx. 
	\end{align*}
	Thus,it follows from \eqref{411},\eqref{412} and \eqref{413} that 
	$$
	\int_{\Omega} (a(x,T_{k}(u_{n}),\nabla T_{k}(u_{n}))-a(x,T_{k}(u_{n}),\nabla T_{k}(u)))\nabla(T_{k}(u_{n})-T_{k}(u))dx\rightarrow0.$$
	when $n$ tends to $+\infty.$ By Lemma 5 of \cite{12}, we obtain \eqref{49}. The strong convergence \eqref{49} implies, for some subsequence still indexed by $n$, that 
	$$ \nabla u_{n} \longrightarrow \nabla u \,\,\, a.e\,\,\, .in  \,\,\, \Omega, $$
	which yields , since $ (a(x,T_{n}(u_{n}),\nabla u_{n}) $ is bounded in $L^{p'}(\Omega)^{N} $, that 
	$$ (a(x,T_{n}(u_{n}),\nabla u_{n}) \rightharpoonup  a(x,u,\nabla u)\,\,\, weakly \,\,\, in \,\,\,\,L^{p'}(\Omega)^{N}. $$
	Therefore ,passing to the limit in \eqref{47} we obtain \eqref{19}.
\end{proof}
\begin{proof}[Proof of Theorems \ref{Athm4}.]
	By Lemma \ref{lm1} and Lemma \ref{lm2}  the sequence ${u_{n}}$ is uniformly bounded in $ W_{0}^{1,p}(\Omega)$. Therefore we can obtain a solution passing to the limit, namely arguing exactly as in Theorem \ref{Athm2}.
\end{proof}
\bibliographystyle{unsrt}  


\begin{thebibliography}{1}
	\bibitem{01} A.C. Lazer, P.J. McKenna, \textit{On a singular nonlinear elliptic boundary-value problem,} J. Math. Anal. Appl., 111(3), (1991), 721-730.
	\bibitem{02}  Alvino,A.Boccardo ,L.,Ferone ,V.,Orisana,.Trombetti,G,,\textit{Existence results for non linear elliptic equations with degenerate coercivity.}Ann.Mat.Pura Appl.182,53-79(2003).
	\bibitem{03} L. Boccardo, L. Orsina, \textit{Semilinear elliptic equations with singular nonlinearities,} Calc. Var. and PDEs, 37(3-4), (2010), 363-380.
	\bibitem{04} Croce, G., \textit{An elliptic problem with two singularities,}  Asymptotic Analysis, vol. 78, no. 1-2, pp. 1-10, 2012.
	\bibitem{07}   Croce, G., \textit{The regularizing effects of some lower order terms in an elliptic equation with degenerate coercivity.} Rend. Mat. 27, 299-314 (2011).
	\bibitem{071} L. M. De Cave, \textit{Nonlinear elliptic equations with singular
		nonlinearities,} Asymptotic Analysis 84 (2013) 181–195.
	\bibitem{05} M.G. Crandall, P.H. Rabinowitz and L. Tartar, \textit{On a Dirichlet problem with a singular nonlinearity,} Comm. Partial Differential Equations 2 (1977), 193-222.
	\bibitem{06} Bennett, C., Sharpley, R.: \textit{Interpolation of Operators.} Academic, Boston (1988). 
	\bibitem{08}  L. M. De Cave,\textit{Nonlinear elliptic equations with singular nonlinearities,} Asymptot. Anal. 84 (2013) 181-195.
	\bibitem{12} Boccardo, L., Murat, F., Puel, J.P.,\textit{Existence of bounded solutions for nonlinear elliptic unilateral  problems.} Ann. Mat. Pura Appl. 152, 183–196 (1988).
	\bibitem{20} Oliva, F., Petitta, F.,\textit{On singular elliptic equations with measure sources.} ESAIM Control Optim. Calculus Var. 22(1), 289–308 (2016).
	\bibitem{21} Talenti, G.,\textit{Linear elliptic P.D.E’s: level sets, rearrangements and a priori estimates of solutions.} Boll.
	Un. Mat. Ital. B 4(6), 917–949 (1985) 
	\bibitem{23} Talenti, G.,\textit{Inequalities in rearrangement invariant function spaces.} In: Proceedings of the spring School held in Prague, 23628 May, 1994, vol. 5, pp. 177–230. Nonlinear analysis, function spaces and applications Mathematical Institute, Czech Academy of Sciences, and Prometheus Publishing House, Praha (1994).
	\bibitem{232} Canino, A., Sciunzi, B. Trombetta,\textit{ A. Existence and uniqueness for p-Laplace equations involving singular nonlinearities.} Nonlinear Differ. Equ. Appl. 23, 8 (2016).
	\bibitem{233} P. Garain, \textit{On a degenerate singular elliptic problem} arXiv:1803.02102v2. 
	\bibitem{234} Li, Q., Gao, W. \textit{  Existence of Weak Solutions to a Class of Singular Elliptic Equations.}  Mediterr. J. Math. 13, 4917–4927 (2016).
	\bibitem{24} Leray, J., Lions, J.L.: \textit{Quelques resultats de Visik sur les problèmes elliptiques non linéaires par les méthodes de Minty-Browder.} Bull. Soc. Math. Fr. 93, 97-107 (1965).
	\bibitem{25} G. Stampacchia, \textit{Le problème de Dirichlet pour les équations elliptiques du seconde ordre à coefficients discontinus, Ann. Inst. Fourier (Grenoble)} 15 (1965) 189-258.
\end{thebibliography}

\end{document}